\DeclareFontFamily{U}{matha}{\hyphenchar\font45}
\DeclareFontShape{U}{matha}{m}{n}{
      <5> <6> <7> <8> <9> <10> gen * matha
      <10.95> matha10 <12> <14.4> <17.28> <20.74> <24.88> matha12
      }{}
\DeclareSymbolFont{matha}{U}{matha}{m}{n}
\DeclareMathSymbol{\operp}         {2}{matha}{"6B}
\newcommand{\bp}{{\bar{P}}}
\newcommand{\D}{{\mathcal{D}}}
\newcommand{\V}{{\mathcal{V}}}
\newcommand{\T}{{\mathcal{T}}}
\newcommand{\C}{{\mathcal{C}}}
\newcommand{\K}{{\mathcal{K}}}
\newcommand{\R}{\mathds{R}}
\newcommand{\mx}{\mathfrak{X}}
\newcommand{\lie}[1]{\mathfrak{#1}}
\newcommand{\dr}{\mathbf{d}}
\newcommand{\ip}[1]{{\mathbf{i}}_{#1}}
\newcommand{\an}[1]{\arrowvert_{#1}}
\newcommand{\pairing}{\langle\cdot\,,\cdot\rangle}
\DeclareMathOperator{\dom}{Dom}
\DeclareMathOperator{\pr}{pr}
\newcommand{\inv}{^{-1}}
\theoremstyle{plain}
\newtheorem{theorem}{Theorem}
\newtheorem{lemma}[theorem]{Lemma}
\newtheorem{proposition}[theorem]{Proposition}
\theoremstyle{definition}
\theoremstyle{remark}
\newtheorem{remark}{Remark}
\begin{document}
\date{}
\title{Induced Dirac structures on isotropy type manifolds}
\author{M. Jotz}
\address{Section de Math{\'e}matiques\\
Ecole Polytechnique
  F{\'e}d{\'e}rale de Lausanne\\
1015 Lausanne, Switzerland\\
madeleine.jotz@epfl.ch}
\author{
T.S. Ratiu}
\thanks{The authors were partially supported by Swiss NSF grant 200021-121512}
\address{Section de Math{\'e}matiques
 and Bernouilli Center\\
Ecole Polytechnique
  F{\'e}d{\'e}rale de Lausanne\\
1015 Lausanne, Switzerland\\
tudor.ratiu@epfl.ch
}
\maketitle

\begin{abstract}
A new method of singular reduction is extended from Poisson
to Dirac manifolds. Then it is shown that the Dirac structures on
the strata of the quotient coincide with those of the only other known 
singular Dirac reduction method.

\noindent \textbf{AMS Classification:} Primary subjects: 70H45, 70G65, 53C15, 53C10
\quad Secondary subjects: 70G45, 53D17, 53D99

\noindent \textbf{Keywords:} Dirac structures, singular reduction,
proper action.
\end{abstract}

\section{Introduction}

Dirac structures were introduced in \cite{CoWe88} and studied
systematically for the first time in \cite{Courant90a}. The 
past years have seen remarkable applications of Dirac 
manifolds in geometry and  theoretical physics. Dirac 
structures include two-forms, Poisson structures, 
and foliations. They turn out to provide the right geometric 
framework for nonholonomic systems and circuits. If a Lie 
group action is compatible with the Dirac structure on a manifold, one has all ingredients for reduction;
see  \cite{Courant90a}, \cite{Blankenstein00}, 
\cite{BlvdS01}, \cite{BuCaGu07}, \cite{BuCr05}, 
\cite{JoRa08}, \cite{Zambon08}, \cite{JoRaZa11}, 
\cite{MaYo07}, \cite{MaYo09}, for the regular case, 
\cite{BlRa04}, \cite{JoRaSn11} for the singular situation, 
and \cite{JoRa10b} for optimal reduction. All these 
reduction procedures are in the spirit of Poisson reduction (\cite{MaRa86}, \cite{Sniatycki03}, \cite{FeOrRa09}, 
\cite{JoRa09}).

Consider a Dirac structure $\mathsf{D}$  on a manifold
$M$ which is invariant under the  proper action of a Lie 
group $G$. The most general singular Dirac reduction method 
known today was
introduced in \cite{JoRaSn11} and is in the same spirit as
\cite{Sniatycki03}. Certain mild regularity conditions are 
required to construct a specific subspace of the set of 
pairs of \emph{vector fields} with \emph{one-forms} 
on the stratified space $\bar{M}:=M/G$ which is then shown 
to naturally induce  a Dirac structure on each stratum of 
$\bar{M}$. These regularity conditions are automatically 
satisfied for Poisson manifolds.

However, in the case of Poisson manifolds, there is an 
alternative singular reduction method presented in 
\cite{FeOrRa09}. If $(M,\{\cdot\,,\cdot\})$ is a Poisson 
manifold with a Lie group $G$ acting in a proper canonical 
way on it, then there is an induced Poisson structure on  
each isotropy type manifold $M_H$ (see also \cite{JoRa09}) 
that is invariant under the induced action of $N(H)/H$; 
here $H$ is an isotropy subgroup of the action and $N(H)$ 
is the normalizer of $H $ in $G $. Therefore, these 
Poisson structures descend to the quotient $M_H/(N(H)/H)$ 
whose connected components are strata in $\bar{M}$. 

The first goal of this paper is to extend this reduction method to
symmetric Dirac manifolds. In a first step, the
$G$-invariant Dirac structure $\mathsf{D}$ on $M$ 
is shown to induce a 
Dirac structure $\mathsf{D}_Q$ on each connected 
component $Q$ of an isotropy type manifold $M_H$. Then, 
$\mathsf{D}_Q$ is shown to satisfy 
the conditions for regular Dirac reduction (as in \cite{JoRaZa11}), 
thus descending to a Dirac structure on the stratum $Q/N(H)$ of $\bar{M}$.
The second goal of the paper is to show that the reduced manifolds 
obtained this way correspond exactly to the singularly reduced 
manifolds described previously (those obtained in \cite{JoRaSn11}).
More precisely, the Dirac structures induced on on the connected
components of $M_{(H)}/G$  and of $M_H/N(H)$ are forward and 
backward Dirac images of each other under a canonical diffeomorphism 
between the two reduced manifolds.

\subsubsection{Acknowledgments} The authors would like to 
thank Rui Loja Fernandes for a conversation that led us to consider the subject of 
the present paper. His advice and challenge to compare the two singular
reduction methods are greatly appreciated.
Many thanks go also to the referees for many useful 
comments that have improved the exposition.

\subsubsection{Notation and conventions}
The manifold $M$ is always assumed to be paracompact. 
The sheaf of local functions on $M$ is denoted by $C^\infty(M)$; that is,
an element $f \in C^\infty(M)$ is a smooth function $f:U\to \R$, with $U$ an
open subset of $M$. Similarly, if $E$ is a vector bundle or a generalized 
distribution over $M$, $\Gamma(E)$ denotes the set of local sections 
of $E$. In particular, $\mathfrak{X}(M)$ and $\Omega^1(M)$ are the sets 
of local vector fields and one-forms on $M$, respectively. The open  
domain of definition  of  $\sigma\in \Gamma(E)$ is denoted by 
$\dom(\sigma)$.

\medskip

The Lie group $G$ is always assumed to be connected;
$\mathfrak{g}$ denotes the Lie algebra of $G$. Let $\Phi: 
(g, m) \in G\times M \mapsto gm = g \cdot m=\Phi_g(m) \in M$, 
be a smooth left action. If $\xi \in \mathfrak{g}$ then 
$\xi_M \in \mathfrak{X} (M) $,
defined by $\xi_M(m) := \left.\frac{d}{dt}\right|_{t=0} \exp (t \xi) \cdot m$, 
is called the \emph{infinitesimal generator} or 
\emph{fundamental vector field} defined by $\xi$.

\medskip

A section $X$ of $TM$ (respectively $\alpha$ of  $T^*M$) is
called \emph{$G$-invariant} if $\Phi_g^*X=X$ (respectively
$\Phi_g^*\alpha=\alpha$) for all $g\in G$. Recall that  $\Phi_g^\ast X:=
T\Phi_{g^{-1}}\circ X\circ\Phi_g$, that is, $(\Phi_g^\ast X)(m)=T_{gm}\Phi_{g^{-1}}X(gm)$ for all 
$m\in M$.

\medskip

We write $TM\operp TM^*$ for the direct sum of the vector  
bundles  $TM$ and $T^*M$ and use the same notation for the sum 
of a  tangent (a distribution in $TM$) and cotangent 
distribution (a distribution in  $T^*M$; 
see section \ref{dis} for the definitions of those objects). We choose
this notation because we want to distinguish these direct sums from  
direct sums of distributions in a same vector bundle, denoted, as usual, 
by $\oplus$.

\section{Generalities on Dirac structures and distributions}
\label{sec:Dirac_structures}

\subsection{Generalized distributions and orthogonal spaces}\label{dis}

Let $E$ be a vector bundle over $M$.
A  \emph{generalized distribution $ \Delta $ in $E$} is a subset
$\Delta$ of $E$  such that for each $m\in M$, the set $ \Delta (m) : 
= \Delta \cap E(m)$ is a
vector subspace of $E_m$. The number $\dim \Delta (m) $ is called the
\emph{rank}  of $ \Delta $ at $ m  \in  M $. A point $m\in M$ is a
\emph{regular} point of the generalized distribution $\Delta$ if there exists a
neighborhood $U$ of $m$ such that the rank of $\Delta$ is constant on
$U$. Otherwise, $m$ is a \emph{singular} point of the generalized distribution. 

A local \emph{differentiable
  section} of $ \Delta $ is a smooth section $\sigma\in \Gamma(E)$
defined on some open subset $ U \subset  M $ such that $ \sigma(u) \in  \Delta (u)
$ for each $ u \in  U $. We denote with $ \Gamma(
\Delta ) $ the space
of local sections of $ \Delta $. A generalized
distribution is said to be \emph{differentiable} or \emph{smooth} if for every
point $ m \in  M $ and every vector $ v \in  \Delta (m) $, there is a
differentiable section $ \sigma \in  \Gamma ( \Delta ) $ defined on an open
neighborhood $ U $ of $ m $ such that $ \sigma(m) = v $. 

A smooth generalized distribution in  the tangent bundle $TM$ 
(that is, $E=TM$) is called a \emph{smooth tangent distribution}. 
A smooth generalized distribution in the cotangent bundle $T^*M$ 
(that is, $E= T ^\ast M $) is called a \emph{smooth cotangent distribution}. 
We will work most of the time with smooth generalized
distributions in the \textit{Pontryagin bundle} $\mathsf P_M:=TM\operp T^*M$ which we will
call \emph{smooth generalized distributions}, for simplicity.

\subsubsection{Generalized smooth distributions and annihilators}
Assume that $E$ is a vector bundle on $M$ that is endowed 
with a smooth non-degenerate
symmetric bilinear map $\pairing_E$.
In the special case where $E$ is the  \emph{Pontryagin bundle} $\mathsf{P}_M = TM \operp T^* M$ of  
a smooth manifold $M$, we will always consider the non-degenerate symmetric fiberwise bilinear form 
of signature $(\dim M, \dim M)$ given by
\begin{equation}\label{pairing}
\left\langle (u_m, \alpha_m), ( v_m, \beta_m ) \right\rangle : 
= \beta_m(u_ m ) + \alpha_m( v _m)
\end{equation}
for all $u _m, v _m \in T _mM$ and $\alpha_m, \beta_m \in T^\ast_mM$.

If $\Delta \subset E $ is a smooth distribution in $E$, its
\emph{smooth} \emph{orthogonal} distribution  is the smooth
generalized distribution $\Delta^\perp $ in $E$  defined by 
\begin{align*}
\Delta^ \perp (m): =
 \left\{\tau(m) \left|  
\begin{array}{c}\tau \in
\Gamma(E) \text{ with } m\in \dom(\tau) \text{ is such that  for all }\\
\sigma \in\Gamma(\Delta) \text{  with } m\in\dom(\sigma),\\
\text{ we have }  \left\langle \sigma,\tau
\right\rangle_E = 0 \text{ on } \dom(\tau)\cap\dom(\sigma)\end{array}\right.\right\}.
\end{align*}
We have $\Delta \subset \Delta^{ \perp\perp }$, in general strict.
 Note that the smooth orthogonal distribution of a smooth generalized
 distribution
 is smooth by construction. If the distribution $\Delta$ is
a vector subbundle of $E$, then its smooth orthogonal distribution is
also a vector subbundle of $E$. 
The proof of the following proposition can be found in \cite{JoRaSn11}.
\begin{proposition}\label{prop_intersection}
Let $\Delta_1$ and $\Delta_2$ be smooth subbundles of the vector bundle
$(E,\pairing_E)$.
Since both $\Delta_1$ and $\Delta_2$ have constant rank 
on $M$, 
their smooth orthogonals $\Delta_1^\perp$ 
and $\Delta_2^\perp$ are also smooth subbundles of $E$ and equal to the
pointwise orthogonals of $\Delta_1$ and $\Delta_2$. 
The following are equivalent:
\begin{itemize}
\item[{\rm (i)}] The intersection
$\Delta_1^\perp\cap\Delta_2^\perp$ is smooth.
\item[{\rm (ii)}] $ (\Delta_1+\Delta_2)^\perp=
\Delta_1^\perp\cap\Delta_2^\perp$
\item[{\rm (iii)}]
$(\Delta_1^\perp\cap\Delta_2^\perp)^\perp=\Delta_1+\Delta_2$
\item[{\rm (iv)}] $\Delta_1^\perp\cap\Delta_2^\perp$ has constant rank on $M$.
\end{itemize}
\end{proposition}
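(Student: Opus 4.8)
The plan is to reduce everything to the interplay between fibrewise linear algebra and smoothness. Write $S:=\Delta_1+\Delta_2$ for the (automatically smooth) sum distribution and $A:=\Delta_1^\perp\cap\Delta_2^\perp$. As recorded in the statement, $\Delta_1^\perp$ and $\Delta_2^\perp$ are subbundles equal to the fibrewise orthogonals, so at each $m$ the nondegeneracy of $\pairing_E$ yields the two de Morgan identities $A(m)=(\Delta_1(m)+\Delta_2(m))^\perp=S(m)^\perp$ and $S(m)=(\Delta_1^\perp(m)\cap\Delta_2^\perp(m))^\perp=A(m)^\perp$ (fibrewise orthogonals), whence $\rk A(m)=\rk E-\rk S(m)$. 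First I would record two structural facts about the smooth orthogonal of a smooth generalized distribution $\Delta$: it is always smooth, and, since the pairing only detects smooth sections, $\Delta^\perp$ is the largest smooth distribution contained in the fibrewise orthogonal $m\mapsto\Delta(m)^\perp$. Applied to $S$ this gives that $S^\perp$ is exactly the smooth part of $A$, so $S^\perp\subseteq A$ and hence $(\Delta_1+\Delta_2)^\perp\subseteq\Delta_1^\perp\cap\Delta_2^\perp$ always. I would also use that smooth distributions have lower semicontinuous rank, whereas $\rk A=\rk E-\rk S$ is upper semicontinuous, the rank of the sum $S$ of two subbundles being lower semicontinuous.

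Next I would settle the three easy equivalences. For (i)$\Leftrightarrow$(ii): by the above, $(\Delta_1+\Delta_2)^\perp$ is precisely the smooth part of $A$, so the equality in (ii) holds exactly when $A$ coincides with its smooth part, that is, exactly when $A$ is smooth, which is (i). For (i)$\Leftrightarrow$(iv): if $A$ is smooth, its rank is both lower semicontinuous (smoothness) and upper semicontinuous (from $\rk A=\rk E-\rk S$), hence locally constant, giving (iv); conversely, a fibrewise intersection of subbundles of constant rank is a subbundle, hence smooth, giving (i). For (iv)$\Rightarrow$(iii): when $A$ has constant rank it is a subbundle, so its smooth orthogonal agrees with the fibrewise orthogonal, and the second de Morgan identity $A(m)^\perp=S(m)$ gives $A^\perp=\Delta_1+\Delta_2$, which is (iii).

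The hard part will be the remaining implication (iii)$\Rightarrow$(iv), which closes the cycle. Here I would first rewrite the left-hand side of (iii): since the smooth orthogonal depends only on the smooth sections, $A^\perp=(\text{smooth part of }A)^\perp=(S^\perp)^\perp=S^{\perp\perp}$, so (iii) asserts exactly $S^{\perp\perp}=S$. The crux is then a rank formula for smooth orthogonals: for a smooth, locally finitely generated distribution such as $S=\Delta_1+\Delta_2$, the orthogonal $S^\perp$ only records the generic (locally maximal) rank $\hat s(m)$ of $S$ near $m$, so that $\rk S^\perp(m)=\rk E-\hat s(m)$ and, applying this once more, $\rk S^{\perp\perp}(m)=\hat s(m)$. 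Since $S\subseteq S^{\perp\perp}$ always and $\rk S(m)\le\hat s(m)$, the equality $S^{\perp\perp}=S$ forces $\rk S(m)=\hat s(m)$ at every point; thus $\rk S$ equals its generic value everywhere and is locally constant, which via $\rk A=\rk E-\rk S$ is exactly (iv).

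I expect this rank formula — the statement that passing to the smooth orthogonal discards the excess fibre dimension present only over the singular set — to be the one genuinely technical ingredient, and the natural main obstacle. It is where the subbundle hypothesis on $\Delta_1,\Delta_2$ and the local finite generation of $\Delta_1+\Delta_2$ are really used, and I would prove it by combining the semicontinuity of ranks with a local-frame argument over the open dense set on which $\rk S$ is maximal, extending the resulting orthogonal frame across the singular set by continuity. Once this is in hand, the four conditions are all seen to be equivalent to the single statement that $\rk(\Delta_1+\Delta_2)$ is locally constant.
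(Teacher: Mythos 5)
Your treatment of the three ``easy'' equivalences is correct and is the standard argument: $(\Delta_1+\Delta_2)^\perp$ is the smooth part of the pointwise intersection $A:=\Delta_1^\perp\cap\Delta_2^\perp$, which gives (i)$\Leftrightarrow$(ii); lower semicontinuity of the rank of a smooth distribution together with upper semicontinuity of $\rk A=\rk E-\rk(\Delta_1+\Delta_2)$ gives (i)$\Leftrightarrow$(iv); and when $A$ has constant rank it is a subbundle, so its smooth orthogonal is the fibrewise one and (iv)$\Rightarrow$(iii) follows. Your reduction of (iii) to the assertion $S^{\perp\perp}=S$, with $S=\Delta_1+\Delta_2$, is also correct. (Note that the paper itself contains no proof of this proposition---it is quoted from \cite{JoRaSn11}---so your argument can only be judged on its own terms.)

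The gap is in (iii)$\Rightarrow$(iv), exactly at the step you flagged as the technical crux, and it is not repairable: the second half of your rank formula is false. Applying $\rk\,\Delta^\perp(m)=\rk E-\hat\delta(m)$ (with $\hat\delta$ the stable local maximum of $\rk\Delta$) to $\Delta=S^\perp$ gives $\rk S^{\perp\perp}(m)=\rk E-(\text{stable local max of }\rk S^\perp\text{ near }m)$, which equals the stable local \emph{minimum} of $\hat s$ near $m$, not $\hat s(m)$; since $\hat s$ is only upper semicontinuous, these differ precisely in the interesting cases. Concretely, let $E=\R\times\R^2\to\R$ with the Euclidean pairing, $\Delta_1=\R e_1$, and $\Delta_2=\R\,(\cos\theta,\sin\theta)$ with $\theta$ smooth and vanishing exactly on $[1,2]$. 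Then $A$ is $\R e_2$ over $[1,2]$ and $\{0\}$ elsewhere, so every smooth local section of $A$ has the form $(0,h)$ with $h$ vanishing off $[1,2]$; bump functions $h$ supported in $(1,2)$ force the $e_2$-component of any section of $A^\perp$ to vanish on $(1,2)$, hence on $[1,2]$ by continuity, and one checks $A^\perp=\R e_1$ over $[1,2]$ and $\R^2$ elsewhere, i.e.\ $A^\perp=\Delta_1+\Delta_2$. Thus (iii) holds while $A$ is neither smooth nor of constant rank, and indeed $\rk S^{\perp\perp}(1)=1<2=\hat s(1)$. (Over a base of dimension at least two even the first half of your formula fails: take $\Delta_2$ spanned by $\cos\theta\,e_1+\sin\theta(\cos\phi\,e_2+\sin\phi\,e_3)$ with $\theta$ flat-vanishing on a strip and $\phi=y/(1-x)$; the line $A$ spins as one approaches the strip, so $A$ has no nonzero smooth section at the boundary although $\rk E-\hat s=1$ there.) The same example shows that, with the paper's definition of the smooth orthogonal applied verbatim to the (non-smooth) distribution $A$, the implication (iii)$\Rightarrow$(iv) is itself false---and it remains false in split signature after adding a negative-definite rank-two summand to $E$---so no argument can close the cycle through (iii); what can actually be proved is (i)$\Leftrightarrow$(ii)$\Leftrightarrow$(iv)$\Rightarrow$(iii). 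This is a defect of the statement as quoted (and a point to raise with the cited source) rather than something your strategy could have overcome; it is worth observing that the body of the paper only ever uses the sound implication (i)$\Rightarrow$(iv), in the remark following Theorem \ref{thm1} and in Lemma \ref{intersection_has_constant_rank_cor}.
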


A tangent (respectively cotangent) distribution $\T\subseteq TM$ (respectively
$\mathcal{C}\subseteq T^*M$) can be identified with
the smooth generalized distribution $\T\operp\{0\}$ 
(respectively $\{0\}\operp\mathcal{C}$). 
The smooth orthogonal distribution of $\T\operp\{0\}$ in $TM\operp T^*M$ is 
easily computed to be 
$(\T\operp\{0\})^\perp=TM\operp \T^\circ$,
where
\[\T^\circ(m)=\left\{\alpha(m)\left|
\begin{array}{c}
 \alpha\in\Omega^1(M), m\in\dom(\alpha) \text{
  and } 
\alpha(X)=0\\ \text{ on }\dom(\alpha)\cap\dom(X)\text{ for all }
X\in\Gamma(\T)
\end{array}
\right.\right\}\subseteq T_m^*M
\]
for all $m\in M$. This smooth cotangent distribution is called the
\emph{smooth annihilator} of $\T$. Analogously, we define the smooth annihilator
$\C^\circ\subseteq TM$ of a
cotangent distribution $\C$. Then $\C^\circ$ is a smooth tangent distribution
and we have $(\{0\}\operp\C)^\perp=\C^\circ\operp T^*M$. 
\medskip
  
The tangent distribution $\V$ spanned by the fundamental vector fields of the
action of a Lie group $G$ on a manifold $M$ will be of great importance later on. 
At every point $m \in M $ it is defined by
\[\V(m)=\{\xi_M(m)\mid \xi\in\lie g\}.\]
If the action is not free, the rank of the fibers of $\V$
can vary on $M$. The smooth annihilator  $\V^\circ$ of $\V$ is given by
\[\V^\circ(m)=\{\alpha(m)\mid \alpha\in\Omega^1(M), \,m\in\dom(\alpha),\text{
  such that } \alpha(\xi_M)
=0 \text{ for
  all }\xi \in \lie g\}.\]
We will use also the smooth generalized distribution $\K:=\V\operp \{0\}$ and its
smooth orthogonal space $\K^\perp=TM\operp\V^\circ$.

We will need the smooth codistribution $\V_G^\circ$ spanned
by the $G$-invariant sections of $\V^ \circ$. It is shown in \cite{JoRaSn11}
that 
$\V_G^\circ(m)=\{\mathbf{d}f_m\mid f\in C^\infty(M)^G\}$
for all $m\in M$. Thus, $\V_G^\circ$ is spanned by the exact $G$-invariant
sections of $\V^\circ$.

\subsection{Dirac structures} \label{admsble}
Recall that the Pontryagin bundle $\mathsf P_M=TM\operp T^*M$ is endowed with a 
natural pairing
$\pairing$ given by \eqref{pairing}.
A \emph{Dirac structure} 
(see \cite{Courant90a}) on $M$ is a Lagrangian subbundle 
$\mathsf{D} \subset TM \operp T^* M $. That is, 
$\mathsf{D}$ coincides with its orthogonal relative to \eqref{pairing} 
and so its fibers are necessarily $\dim M$-dimensional.

The space $\Gamma(\mathsf P_M) $ of local sections of the Pontryagin
bundle is also endowed with a  skew-symmetric bracket, the \emph{Courant bracket}, given by
\begin{align}\label{wrong_bracket}
[(X, \alpha), (Y, \beta) ] : &
= \left( [X, Y],  {\boldsymbol{\pounds}}_{X} \beta - {\boldsymbol{\pounds}}_{Y} \alpha + \frac{1}{2}
  \mathbf{d}\left(\alpha(Y) 
- \beta(X) \right) \right)\nonumber \\
&= \left([X, Y],  {\boldsymbol{\pounds}}_{X} \beta - \mathbf{i}_Y \mathbf{d}\alpha 
- \frac{1}{2} \mathbf{d} \left\langle (X, \alpha), (Y, \beta) \right\rangle
\right)
\end{align}
for all $(X,\alpha), (Y,\beta)\in\Gamma(\mathsf P_M)$ 
(see \cite{Courant90a}). This bracket is $\mathds{R}$-bilinear 
(in the sense that
$[a_1(X_1, \alpha_1)+a_2(X_2, \alpha_2), (Y, \beta) ]
=a_1[(X_1, \alpha_1), (Y, \beta) ]+a_2[(X_2, \alpha_2), (Y, \beta)
]$ for all $a_1,a_2\in\R$ and
$(X_1, \alpha_1),(X_2, \alpha_2)$, $(Y, \beta)\in\Gamma(TM\oplus T^*M)$  on the common domain of
definition of the three sections), skew symmetric, but does not satisfy the Jacobi identity.

 The Dirac structure is \emph{integrable} (or \emph{closed})  if 
$[ \Gamma(\mathsf{D}), \Gamma(\mathsf{D}) ] \subset \Gamma(\mathsf{D}) $. Since 
$\left\langle (X, \alpha), (Y, \beta) \right\rangle = 0 $ if $(X, \alpha), (Y,
\beta) \in \Gamma(\mathsf{D})$, 
integrability of the Dirac structure is often expressed in the literature
relative to a non-skew symmetric
 bracket that differs from 
\eqref{wrong_bracket} by eliminating in the second formula the 
third term of the second component. This truncated expression
which satisfies the Jacobi identity if and only if the Dirac structure 
is integrable, but is no longer skew-symmetric, appears
in the literature sometimes also as \emph{Courant}, 
\emph{Courant-Dorfman}, or \emph{Dorfman bracket}:
\begin{equation}\label{Courant_bracket}
[(X, \alpha), (Y, \beta) ] : = \left( [X, Y],  {\boldsymbol{\pounds}}_{X} \beta - \ip{Y} \dr\alpha \right).
\end{equation}  

If the Dirac structure $(M,\mathsf D)$
is integrable, then $\mathsf D$ has the structure of a Lie algebroid over $M$ 
with anchor map the projection $\mathsf P_M\to TM$ and bracket the Courant bracket.

\subsubsection{Maps in the Dirac category}
Let $(M,\mathsf D_M)$ and $(N,\mathsf D_N)$ be smooth Dirac manifolds.
A smooth map $\phi:(M,\mathsf D_M)\to (N,\mathsf D_N)$
is  a \emph{forward Dirac map} if 
for all $(Y,\beta)\in\Gamma(\mathsf D_N)$
there exist $X\in\mx(M)$
such that 
$X\sim_\phi Y$ and $(X,\phi^*\beta)\in\Gamma(\mathsf D_M)$.
It is a \emph{backward Dirac map} if for all
$(X,\alpha)\in\Gamma(\mathsf D_M)$ there exist $(Y,\beta)\in\Gamma(\mathsf D_N)$
such that 
$X\sim_\phi Y$ and $\alpha=\phi^*\beta$.

If $\phi$ is a diffeomorphism, then it is easy to 
check that it is a backward Dirac map if and only if it is a forward Dirac map. 

Let $M$ and $N$ be smooth manifolds and 
 $\phi:M\to N$ a smooth map. Assume that $N$ is endowed 
with a Dirac structure $\mathsf D_N$. The \emph{pull back}
$\phi^*\mathsf D_N$ of $\mathsf D_N$ is
the subdistribution of $\mathsf P_M$ defined 
by 
\begin{equation}\label{pullback}
(\phi^*\mathsf D_N)(m)=\left\{
(v_m,\alpha_m)\in\mathsf P_M(m)\left| 
\begin{array}{c}
\exists (w_{\phi(m)},\beta_{\phi(m)})\in\mathsf D_N(\phi(m))\\
\text{ such that } T_m\phi(v_m)=w_{\phi(m)} \\\text{ and }
\alpha_m=(T_m\phi)^*\beta_{\phi(m)}
\end{array}\right.\right\}
\end{equation}
for all $m\in M$. 

Each fiber of the subdistribution $\phi^*\mathsf D_N$ is 
Lagrangian, that is, $((\phi^*\mathsf D_N)(m))^\perp=
(\phi^*\mathsf D_N)(m)$ in $T_mM\times T_m^*M$ for all 
$m\in M$. This fact is well known; we will prove it 
here for the sake of completeness. The inclusion
$(\phi^*\mathsf D_N)(m)\subseteq ((\phi^*\mathsf D_N)
(m))^\perp$ is easy.
For the converse inclusion, choose $(u_m,\gamma_m)\in ((\phi^*\mathsf D_N)(m))^\perp\subseteq 
T_mM\times T_m^*M$. Since $\ker(T_m\phi:T_mM\to T_{\phi(m)}N)\times \{0_m\}\subseteq (\phi^*\mathsf D_N)(m)$
by definition, we have 
$\gamma_m(v_m)=0$ for all $v_m\in \ker T_m\phi$. Thus, 
$\gamma_m\in(\ker T_m\phi)^\circ=\operatorname{range}(T_m\phi)^*$ and 
there exists $\delta_{\phi(m)}\in T_{\phi(m)}^*N$
such that $\gamma_m=(T_m\phi)^*\delta_{\phi(m)}$.
Then, the equality 
\begin{align*}
0&=\langle (u_m,\gamma_m), (v_m,(T_m\phi)^*\beta_{\phi(m)})\rangle\\
&=\langle (u_m,(T_m\phi)^*\delta_{\phi(m)}), (v_m,(T_m\phi)^*\beta_{\phi(m)})\rangle\\
&=\langle (T_m\phi (u_m),\delta_{\phi(m)}), (T_m\phi(v_m),\beta_{\phi(m)})\rangle
\end{align*}
holds for all pairs $(v_m,(T_m\phi)^*\beta_{\phi(m)})\in (\phi^*\mathsf D_N)(m)$, 
that is,
for all $(T_m\phi v_m,\beta_{\phi(m)})\in\mathsf D_N(\phi(m))$. This
leads to
\begin{align*}
(T_m\phi(u_m),\delta_{\phi(m)})\in 
&\left(\mathsf D_N(\phi(m))\cap(\operatorname{range}
(T_m\phi)\times T_{\phi(m)}^*N)\right)^\perp\\
&=\mathsf D_N(\phi(m))+\{0_{\phi(m)}\}\times \ker(T_m\phi)^*.
\end{align*}
Thus, there exists $\delta'_{\phi(m)}\in T_{\phi(m)}^*N$ such that
$(T_m\phi)^*\delta'_{\phi(m)}=(T_m\phi)^*\delta_{\phi(m)}=\gamma_m$ and 
$(T_m\phi u_m,\delta'_m)\in\mathsf D_N(\phi(m))$. This shows that 
$(u_m,\gamma_m)\in(\phi^*\mathsf D_N)(m)$.

 Hence, if $\phi^*\mathsf D_N$ is smooth,
it is a Dirac structure on $M$ such that $\phi$ is a backward Dirac map.
The Dirac structure $\phi^*\mathsf D_N$ is then the \emph{backward Dirac 
image} of $\mathsf D_N$ under $\phi$.

\subsubsection{Symmetries of Dirac manifolds}
Let $G$ be a Lie group and
$\Phi: G\times M \rightarrow M$ a smooth left action. Then $G$ is 
called a \emph{symmetry Lie group of} $\mathsf{D}$ if for every 
$g\in G$ the condition $(X,\alpha) \in \Gamma(\mathsf{D})$ implies 
that  $\left(\Phi_g^\ast X, \Phi_g^\ast \alpha \right)\in\Gamma(\mathsf{D})$. 
In other words, $\Phi_g:(M,\mathsf D)\to (M,\mathsf D)$ is 
a forward and backward Dirac map for all $g\in G$.
   We say then that the Lie group $G$ acts
\emph{canonically} or that the action of $G$ on $M$ is \emph{Dirac}. 

Let $\mathfrak{g}$ be a Lie algebra and $\xi \in \mathfrak{g} \mapsto \xi_M \in
\mathfrak{X}(M)$ be a smooth left Lie algebra action, that is, the map $$(x, \xi) \in
M \times \mathfrak{g} \mapsto \xi_M(x) \in TM $$ is smooth and $\xi \in\mathfrak{g}
\mapsto \xi_M  \in \mathfrak{X}(M)$ is a Lie algebra anti-homomorphism.  The Lie
algebra $\mathfrak{g}$ is said to be a 
\emph{symmetry Lie algebra of} $\mathsf{D}$ if for every $\xi \in \mathfrak{g}$ 
the condition
$(X,\alpha) \in \Gamma(\mathsf{D})$ implies that  
$\left({\boldsymbol{\pounds}}_{\xi_M}X,{\boldsymbol{\pounds}}_{\xi_M}\alpha \right) \in
\Gamma(\mathsf{D})$.  Of course, if $\mathfrak{g}$ is the Lie algebra of
$G $ and $\xi\mapsto \xi_M$ the  Lie algebra anti-homomorphism, then if $G $
is a symmetry Lie group  of $\mathsf{D}$ it follows that $\mathfrak{g}$ is a 
symmetry Lie
algebra of $\mathsf{D}$.

\subsubsection{Regular reduction of Dirac manifolds}
Let $(M,\mathsf D)$ be a smooth Dirac manifold
with a proper smooth Dirac action of
a Lie group $G$ on it, such that all isotropy subgroups of the action are 
conjugated. Then the space $\bar M:=M/G$ of orbits of the action is 
a smooth manifold and the orbit map $\pi:M\to\bar M$ is a smooth surjective 
submersion. We have the following theorem (see \cite{JoRaZa11}).

\begin{theorem}\label{conj-orbits-red} 
Let $G$ be a connected Lie group acting in a proper way on the manifold 
$M$, such that all isotropy subgroups are conjugated. Assume that 
$\mathsf{D}\cap\K^\perp$
has constant rank on $M$, where $\K^\perp$ is defined as the direct sum 
$TM\operp\V^\circ$.
Then the  Dirac structure $\mathsf{D}$ on $M$ induces a Dirac structure 
$\bar{\mathsf{D}}$ on the quotient $\bar M=M/G$ given by
\begin{equation}\label{red_dir_conj_iso}
\bar{\mathsf{D}}(\bar m)=\left\{\left(\bar X(\bar m),\bar\alpha(\bar m)\right)\in
  T_{\bar m}\bar M\times T^*_{\bar m}\bar M \,\bigg|
\begin{array}{c}\exists X\in
  \mx(M)\text{ such that } X\sim_{\pi}\bar X\\ 
\text{ and }(X,\pi^*\bar\alpha)\in\Gamma(\mathsf{D})\end{array}\right\}
\end{equation} 
for all $\bar{m} \in \bar{M}$. If $(M,\mathsf{D})$ is integrable, 
then $(\bar M,\bar{\mathsf{D}})$ is also integrable.
\end{theorem}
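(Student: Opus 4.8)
The plan is to exhibit $\bar{\mathsf D}$ as the forward Dirac image of $\mathsf D$ under the orbit map $\pi\colon M\to\bar M$ and then to verify, in turn, that it is well defined, Lagrangian, smooth, and---in the integrable case---closed. Since the isotropy groups are conjugate the orbits have constant dimension, so $\V$, $\K=\V\operp\{0\}$ and $\K^\perp=TM\operp\V^\circ$ are smooth constant-rank subbundles of $\mathsf P_M$ and $\ker T_m\pi=\V(m)$ for every $m$. I would first rewrite \eqref{red_dir_conj_iso} fibrewise: fixing $\bar m$ and $m\in\pi^{-1}(\bar m)$, and noting that $(T_m\pi)^*\bar\nu\in(\ker T_m\pi)^\circ=\V^\circ(m)$, a pair $(v,(T_m\pi)^*\bar\nu)$ lies in $\mathsf D(m)$ iff it lies in $(\mathsf D\cap\K^\perp)(m)$. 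Thus $\bar{\mathsf D}(\bar m)$ is the image of $(\mathsf D\cap\K^\perp)(m)$ under the fibrewise surjective map $(v,(T_m\pi)^*\bar\nu)\mapsto(T_m\pi(v),\bar\nu)$, whose kernel is $(\mathsf D\cap\K)(m)$. Independence of the representative $m$ is then immediate: since $\pi\circ\Phi_g=\pi$ and $\mathsf D$, $\K^\perp$ are $G$-invariant, the images computed at $m$ and at $g\cdot m$ coincide, so $\bar{\mathsf D}(\bar m)\subseteq\mathsf P_{\bar M}(\bar m)$ is a well-defined subspace.

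The Lagrangian property is pure linear algebra. As $\K=\V\operp\{0\}$ is isotropic for \eqref{pairing}, its orthogonal $\K^\perp$ is coisotropic with $(\K^\perp)^\perp=\K$, and the maps $T_m\pi$ and $(T_m\pi)^*$ (recall $\V^\circ(m)=\operatorname{range}(T_m\pi)^*$) identify the reduced space $\K^\perp(m)/\K(m)$ isometrically with $\mathsf P_{\bar M}(\bar m)=T_{\bar m}\bar M\operp T^*_{\bar m}\bar M$. Under this identification $\bar{\mathsf D}(\bar m)$ is the image of $\mathsf D(m)\cap\K^\perp(m)$ under the projection $\K^\perp(m)\to\K^\perp(m)/\K(m)$, that is, the reduction of the Lagrangian subspace $\mathsf D(m)$ by the coisotropic subspace $\K^\perp(m)$. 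Such a reduction is always Lagrangian, so each fibre of $\bar{\mathsf D}$ equals its own orthogonal and has dimension $\dim\bar M$.

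Smoothness is the step I expect to be most delicate, and it is exactly here that the hypothesis on $\mathsf D\cap\K^\perp$ enters. Since $\mathsf D\cap\K^\perp$ has constant rank it is a smooth $G$-invariant subbundle of $\mathsf P_M$; using properness of the action (so that the isotropy groups are compact, allowing one to average and then extend $G$-invariantly along local sections of $\pi$) I would produce, over a neighbourhood of each orbit, a local frame of $\mathsf D\cap\K^\perp$ consisting of $G$-invariant sections. A $G$-invariant section $(X,\alpha)\in\Gamma(\mathsf D\cap\K^\perp)$ is projectable: $\alpha$ is $G$-invariant and lies in $\V^\circ$, hence is basic, $\alpha=\pi^*\bar\alpha$, while $X$ descends to $\bar X$ with $X\sim_\pi\bar X$, so $(X,\alpha)$ pushes forward to a section $(\bar X,\bar\alpha)$ of $\bar{\mathsf D}$. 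By the fibrewise description these push-forwards span $\bar{\mathsf D}$ pointwise; as $\bar{\mathsf D}$ already has constant fibre dimension $\dim\bar M$ by the previous step, being spanned by these smooth sections makes it a smooth Lagrangian subbundle of $\mathsf P_{\bar M}$, i.e. a Dirac structure, and $\pi$ is by construction a forward Dirac map.

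Finally, for integrability I would run the standard lifting argument for a forward Dirac map along a surjective submersion. Given $(\bar Y_i,\bar\beta_i)\in\Gamma(\bar{\mathsf D})$, $i=1,2$, choose lifts $X_i\sim_\pi\bar Y_i$ with $(X_i,\pi^*\bar\beta_i)\in\Gamma(\mathsf D)$. Integrability of $\mathsf D$ gives $[(X_1,\pi^*\bar\beta_1),(X_2,\pi^*\bar\beta_2)]\in\Gamma(\mathsf D)$, and using $[X_1,X_2]\sim_\pi[\bar Y_1,\bar Y_2]$ together with the naturality of ${\boldsymbol{\pounds}}$, $\mathbf d$ and $\mathbf i$ under $\pi$, the one-form component of this bracket is the $\pi$-pullback of the one-form component of $[(\bar Y_1,\bar\beta_1),(\bar Y_2,\bar\beta_2)]$. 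Reading off the definition of the forward image then shows $[(\bar Y_1,\bar\beta_1),(\bar Y_2,\bar\beta_2)]\in\Gamma(\bar{\mathsf D})$, so $\bar{\mathsf D}$ is closed; the one point to check with care is the naturality identities relating the pullbacks of ${\boldsymbol{\pounds}}_{\bar Y_1}\bar\beta_2$ and $\mathbf i_{\bar Y_2}\mathbf d\bar\beta_1$ to their lifted counterparts.
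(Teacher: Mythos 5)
First, a point of reference: the paper itself does not prove Theorem \ref{conj-orbits-red} at all --- it quotes it from \cite{JoRaZa11} --- so your proposal can only be measured against the techniques the paper uses for its closely analogous results (Theorem \ref{thm1} and Lemma \ref{intersection_has_constant_rank_cor}). Your overall architecture is the right one: the pointwise description of $\bar{\mathsf D}(\bar m)$ as the coisotropic reduction of the Lagrangian fibre $\mathsf D(m)$ by $\K^\perp(m)$, the resulting dimension count, and the lifting argument for integrability are all correct. The genuine gap is in the smoothness step, exactly the one you flag as delicate: local $G$-invariant frames of $\mathsf D\cap\K^\perp$ do \emph{not} exist in general. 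The value at $m$ of any $G$-invariant section is fixed by the isotropy representation of $H=G_m$ on the fibre, so invariant sections can only reach the $H$-fixed subspace $\left((\mathsf D\cap\K^\perp)(m)\right)^H$, which may be strictly smaller than the fibre. Concretely, take $M=S^2$ with the transitive action of $SU(2)$ (all isotropies conjugate to $U(1)$) and $\mathsf D=TM\operp\{0\}$: this is $G$-invariant and integrable, $\mathsf D\cap\K^\perp=TM\operp\{0\}$ has constant rank $2$, yet the only $G$-invariant vector field on $S^2$ is the zero field, so no invariant frame exists near any point. Averaging a frame destroys linear independence, and extending a frame equivariantly along a local section of $\pi$ requires its values to be fixed by the isotropy group of the chosen point --- the same obstruction.

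What saves the theorem, and what your argument is missing, is that one does not need invariant sections to span $(\mathsf D\cap\K^\perp)(m)$ itself, only to span it \emph{modulo} $\K(m)$, since $\K(m)$ is precisely the kernel of the projection onto $\mathsf P_{\bar M}(\bar m)$. By Lemma \ref{average_is_projection}, the $H$-average of $(v,\alpha)\in(\mathsf D\cap\K^\perp)(m)$ equals $(\pr_{T_mQ}v,\alpha\circ\pr_{T_mQ})$, where $Q$ is the leaf of $\T_G$ through $m$; in the single-orbit-type case one checks $T_mQ^\varrho\subseteq\V(m)$, so this average lies again in the fibre, is $H$-fixed, satisfies $\alpha\circ\pr_{T_mQ}=\alpha$ (because $\alpha\in\V^\circ(m)$), and differs from $(v,\alpha)$ only by the element $(\pr_{T_mQ^\varrho}v,0)\in\K(m)$. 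Any $H$-fixed fibre element is the value at $m$ of a $G$-invariant section of $\mathsf D\cap\K^\perp$ (average an arbitrary smooth section through it), so the push-forwards of invariant sections still span $\bar{\mathsf D}$ pointwise even though they need not span $\mathsf D\cap\K^\perp$; this simultaneously closes the small logical circle in your first step, where the section-wise definition \eqref{red_dir_conj_iso} was identified with the pointwise image before the spanning result was available. This ``averaging moves you only in the $\K$-direction'' argument is exactly the technique the paper deploys in the proofs of Theorem \ref{thm1} and Lemma \ref{intersection_has_constant_rank_cor}; with it in place of the invariant-frame claim, the rest of your proof goes through.
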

The Dirac structure $\bar{\mathsf D}$ is then the \emph{forward Dirac
image $\pi(\mathsf D)$ of $\mathsf D$ under $\pi$}.

\section{Proper actions and orbit type manifolds}\label{sec:proper}
\subsection{Orbits of a proper action}\label{subsection_orbits}

In this section we consider a left smooth proper action 
\begin{equation}
\begin{array}{cccl}
\Phi &:G\times M&\rightarrow& M\\
&(g,m)&\mapsto& \Phi (g,m)\equiv \Phi
_{g}(m)\equiv gm\equiv g\cdot m  \label{action}
\end{array}
\end{equation}
of a Lie group $G$ on a manifold $M$. Let $\pi :M\rightarrow \bar{M}$ be
the natural projection on the orbit space.

For each closed Lie subgroup $H$ of $G$ we define the \emph{isotropy type} set
\begin{equation*}
M_{H}=\{m\in M\mid G_{m}=H\} ,
\end{equation*}
where $G_{m}=\{g\in G\mid gm=m\}$ is the isotropy subgroup of 
$m\in M$. Since the action is proper, all isotropy subgroups are compact. 
The sets $M_{H}$, where $
H $ ranges over the set of closed Lie subgroups of $G$ for which 
$M_{H}$ is non-empty, form a partition of $M$ and therefore they are the equivalence 
classes of an equivalence relation in $M$.
Define the normalizer of $H$ in $G $
\[
N(H)=\{g\in G\mid gHg^{-1}=H\} 
\]
which is a closed Lie subgroup of $G$. Since $H$ is a normal subgroup 
of $N(H)$, the quotient $N(H)/H$ is a Lie group. If $m\in M_{H}$, 
we have $G_{m}=H$ and $G_{gm}=gHg^{-1}$, for all $g\in G$. 
Therefore, $gm\in M_{H}$ if and only if $g\in N(H)$. The action of $G$ on $M$ restricts to an
action of $N(H)$ on $M_{H}$ which induces a free and proper action of 
$N(H)/H$ on $M_{H}$.

Define the \emph{orbit type} set
\begin{equation}
M_{(H)}=\{m\in M\mid G_{m}\text{ is conjugate to }H\mathbf{\}.}  \label{P(H)}
\end{equation}
Then
\[
M_{(H)}=\{gm\mid g\in G,m\in M_{H}\}=\pi ^{-1}(\pi (M_{H})). 
\]
Connected components of  $M_{H}$ and $M_{(H)}$ are embedded 
submanifolds of $M$;  therefore $M_H$ is called an \emph{isotropy 
type manifold} and $M_{(H)}$ an \emph{orbit type manifold}.
Moreover, 
\[
\pi \left(M_{(H)} \right)=\{gm\mid m\in M_{H}\}/G=M_{H}/N(H)=M_{H}/(N(H)/H). 
\]
Since the action of $N(H)/H$ on $M_{H}$ is free and proper, it 
follows that  $M_{H}/(N(H)/H)$ is a quotient manifold of $M_{H}$. Hence, 
$\pi (M_{(H)})$ is a manifold contained in the orbit space $\bar{M}=M/G$.

For a connected component $Q$ of $M_H$, we denote by $N_Q$ the 
subgroup of $N(H)$ leaving $Q$ invariant, i.e.,
\begin{equation}\label{N_Q}
N_Q:=\{g\in G\mid g\cdot q\in Q \text{ for all } q\in Q\}.
\end{equation}
 Thus $N_Q$ is a union of 
connected components of $N(H)$ and is equal to $N(H)$ if $N(H)$ 
is connected.

Partitions of the orbit space $\bar{M}=M/G$ by connected 
components of $\pi\left(M_{(H)}\right)$ is a decomposition of the 
differential space $\bar{M}$. The
corresponding stratification of $\bar{M}$ is called the orbit type
stratification of the orbit space (see \cite{DuKo00}, 
\cite{Pflaum01}).
It is a minimal stratification in the partial order discussed above (see 
\cite{Bierstone75}). This implies that the strata 
$\pi\left(M_{(H)}\right)$ 
of the orbit type stratification are accessible sets of the family of all 
vector fields  on $\bar{M}$ (see \cite{LuSn08}).

\medskip

The smooth distribution $\T_G\subseteq TM$ spanned by the
$G$-invariant vector fields, i.e.,
\begin{equation}\label{T_G}
\T_G(m):=\left\{X(m)\mid X\in\mx(M)^G,\,\text{such that}\, 
m\in\dom(X)\right\}\quad \text{ for all }
\quad m\in M,
\end{equation}
is shown in \cite{OrRa04} to be completely integrable in 
the sense of Stefan and Sussmann. Its leaves are the 
connected components of the isotropy types.

The smooth distribution $\T\subseteq TM$ is defined as the span of the
descending vector fields, that is, the vector fields $X\in\mx(M)$ 
satisfying $[X,\Gamma(\V)]\subseteq \Gamma(\V)$. A descending 
vector field $X$ can be written as a sum
$X=X^G+X^\V$, with $X^G\in\mx(M)^G$ and $X^\V\in\Gamma(\V)$. 
Therefore,
\begin{equation}\label{T}
\T:=\T_G+\V,
\end{equation}
 $\T$ is completely integrable in the sense of
Stefan and Sussmann, and its integral leaves are the connected
components of the orbit types. 
For the proofs of these statements see \cite{JoRaSn11} and \cite{OrRa04}. 

A local section $(X,\alpha)$ of $TM\oplus
\V^\circ=\K^\perp$ satisfying $[X,\Gamma(\V)]\subseteq
\Gamma(\V)$ and $\alpha\in\Gamma(\V^\circ)^G$ is called a
\emph{descending section} of $\mathsf{P}_M$.

\subsection{Tube theorem and $G$-invariant average}
\label{tubeth}
If the action of the Lie group $G$ on $M$ is proper, we can find for each
point $m\in M$ a $G$-invariant neighborhood of $m$ such that the action can be
described easily on this neighborhood. The proof of the following theorem can
be found, for example,  in \cite{OrRa04}.
\begin{theorem}[Tube Theorem]
Let $M$ be a manifold and $G$ a Lie group
acting properly on $M$. For a given point $m\in M$ denote $H := G_m$. 
Then there exists a 
$G $-invariant open neighborhood $U$ of the orbit $G\cdot m$, called 
\emph{tube at $m$}, and a $G $-equivariant diffeomorphism
$G \times_H B \stackrel{\sim}\longrightarrow  U$. The set $B$ is an open 
$H$-invariant neighborhood of $0$
in an $H $-representation space $H$-equivariantly isomorphic to $T_mM/T_m(G\cdot m)$. 
The $H $-representation on $T_mM/T_m(G\cdot m)$ is given by
$h\cdot (v + T_m(G \cdot m)) := T_m\Phi_h(v) + T_m(G
\cdot m)$, $h \in H $, $v \in T_mM $. The smooth manifold  
$ G \times_H B$ is the quotient of the smooth 
free and proper (twisted)
action $\Psi$ of $H$ on $G\times B$ given by $\Psi(h,(g,b)):=(g
h^{-1},h\cdot b)$, $g \in G $, $h \in H $, $b \in B$. 
The $G $-action on $G \times _H B $ is given by $k\cdot [g, b]: = [kg, b]_H $, 
where $k, g \in G $, $b \in B $, and $[g, b]_H \in G \times _H B $ is the
equivalence class 
{\rm (}i.e., $H $-orbit{\rm )} of $(g,b)$.
\end{theorem}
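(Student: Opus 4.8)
The plan is to construct the tube from a $G$-invariant Riemannian metric on $M$ together with its exponential map, following the classical (Palais) slice construction for proper actions. First I would produce a Riemannian metric $h$ on $M$ that is invariant under all the maps $\Phi_g$. Granting such a metric, the orbit $G\cdot m$ is a closed embedded submanifold of $M$ whose tangent space at $m$ is exactly $T_m(G\cdot m)=\V(m)=\{\xi_M(m)\mid \xi\in\lie g\}$, and the normal space $N_m:=\V(m)^{\perp_h}$ is $H$-invariant and $H$-equivariantly isomorphic to $T_mM/T_m(G\cdot m)$, carrying precisely the isotropy (slice) representation described in the statement. Because $H=G_m$ is compact, restricting $h$ gives an $H$-invariant inner product on $N_m$, so that a sufficiently small open ball $B\subseteq N_m$ around $0$ is $H$-invariant; this $B$ will be the representation neighborhood in the statement.

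Next I would exploit the equivariance of the exponential map. Since $h$ is $G$-invariant, the geodesic flow, and hence $\exp^h$, intertwines the action on $TM$ with the action on $M$, that is, $\exp^h_{gx}\circ T_x\Phi_g=\Phi_g\circ\exp^h_x$. Applying the normal exponential map to the normal bundle of the orbit, the standard tubular neighborhood theorem yields a $G$-invariant open neighborhood $U$ of $G\cdot m$ and a $G$-equivariant diffeomorphism from a neighborhood of the zero section of the normal bundle onto $U$. The remaining point is to identify this neighborhood of the zero section with $G\times_H B$.

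For the identification, I would define the map $[g,b]_H\mapsto\Phi_g(\exp^h_m(b))$ on $G\times_H B$. It is well defined because for $h\in H$ the equivariance of $\exp^h$ together with $\Phi_h(m)=m$ gives $\exp^h_m(h\cdot b)=\Phi_h(\exp^h_m(b))$, so the representatives $(g,b)$ and $(gh^{-1},h\cdot b)$ have the same image; it is $G$-equivariant for the action $k\cdot[g,b]_H=[kg,b]_H$ by construction. Injectivity on a small enough $B$ follows from properness together with the fact that $\exp^h_m$ is a diffeomorphism from $B$ onto a slice transverse to the orbit, and surjectivity onto $U$ is the tubular neighborhood statement; after shrinking $B$ if necessary this yields the desired $G$-equivariant diffeomorphism $G\times_H B\stackrel{\sim}{\longrightarrow}U$.

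The main obstacle is the very first step, namely the existence of the $G$-invariant metric $h$ when $G$ is non-compact, since one cannot simply average over $G$ with Haar measure as in the compact case. Instead I would use properness to select a cutoff (Bruhat) function $f\ge 0$ on $M$ whose support meets each orbit in a set with compact nonempty closure and with $\int_G f(g^{-1}x)\,dg>0$ for every $x$, and then average an arbitrary metric $h_0$ against $f$ over $G$, normalizing by this integral and accounting for the modular function of $G$. Properness is exactly what guarantees that the averaging integral converges and depends smoothly on the point, while the compactness of the isotropy groups, which also follows from properness, is what makes the slice representation orthogonal and $B$ choosable $H$-invariant. Everything else, namely the closedness of orbits, the equivariance of $\exp^h$, and the associated-bundle identification, is formal once the invariant metric is in hand.
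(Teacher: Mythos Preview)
The paper does not prove this theorem; it simply cites \cite{OrRa04} and states the result for later use. Your outline is the classical Palais slice construction and is correct, and it is essentially the argument given in the reference the paper points to: build a $G$-invariant Riemannian metric via a Bruhat cutoff function and properness, identify the slice with a small $H$-invariant ball in the normal space $N_m\cong T_mM/T_m(G\cdot m)$, and use the $G$-equivariance of the Riemannian exponential to obtain the diffeomorphism $G\times_H B\to U$, $[g,b]_H\mapsto \Phi_g(\exp^h_m(b))$. One small remark: the modular function does not actually enter the averaging if you set it up as $h_x=\int_G c(g\cdot x)\,(\Phi_g)^*h_0\,dg$ with $\int_G c(g\cdot x)\,dg=1$; invariance follows directly from left invariance of Haar measure, so you can drop that caveat.
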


\subsubsection{$G$-invariant average}
Let $m\in M$ and $H:=G_m$. If the action of $G$ on $M$ is proper, 
the isotropy subgroup $H$ of $m$ is a compact Lie subgroup of $G$. 
Hence, there exists a  Haar measure $\lambda^H $ on $H$, that is, 
a $G$-invariant measure on $H$ satisfying
$\int_Hd\lambda^H=1$ (see, for example, \cite{DuKo00}). Left $G$-invariance of
$\lambda^H$ is equivalent to right $G$-invariance of $\lambda^H$ and 
$R_{h}^*d\lambda^H=d\lambda^H=L_{h}^*d\lambda^H$ for all $h\in H$, where $L_h:H\to H$ 
(respectively
$R_h:H\to H$) denotes left (respectively right) translation by $h$ on $H$.  

Let $X\in\mx(M)$ be defined on the tube $U$ at $m\in M$ for the 
proper action of the Lie group $G$ on $M$. As in the Tube Theorem, 
we write  the points of $U$ as equivalence classes $[g,b]_H$ with 
$g\in G$ and $b\in B$. Recall that for all $h\in H$ we have
$[g,b]_H=[gh^{-1},h\cdot b]_H$. Furthermore, the action of 
$G$ on $U$ is given by $\Phi_{g'}([g,b]_H)=[g'g,b]_H$, for $g' \in G $.
 Define $X_G \in \mathfrak{X} \left(G \times_H B \right)$ by
\begin{align*}
X_G([g,b]_H)&:=\left(\Phi_{g^{-1}}^*\left(\int_H\Phi_h^*Xd\lambda^H\right)
\right)([g,b]_H) \\
&=T_{[e,b]_H}\Phi_g
\left(\int_H\left(T_{[h,b]_H}\Phi_{h^{-1}}X([h,b]_H)\right)d\lambda^H\right)
\end{align*}
for each point $m'=[g,b]_H\in U$.
This definition doesn't depend on the choice of the
representative $[g,b]_H$ for the point $m'$. 
The vector field  $X_G$ is an element of $\mx(M)^G$ (see \cite{JoRaSn11})
and is called the \emph{$G$-invariant average\/} of the vector field
$X$. Note that $X_G$ is, in general, not equal to $X$ (at any point); it can
even vanish. 

Similarly, if $\alpha\in\Omega^1(M)$, define the 
$G$-invariant average $\alpha_G\in \Omega^1(M)^G$ of $\alpha$ by
\begin{align}
\alpha_G([g,b]_H)&:=\left(\Phi_{g^{-1}}^*
\left(\int_H\Phi_h^*\alpha d\lambda^H\right)\right)([g,b]_H)\\
&=\left(\int_H\Phi_h^*\alpha d\lambda^H\right)_{[e,b]_H}\circ
T_{[g,b]_H}\Phi_{g^{-1}}\nonumber\\
&=\left(\int_H(\alpha([h,b]_H)\circ T_{[e,b]_H}\Phi_h) d\lambda^H\right)\circ
T_{[g,b]_H}\Phi_{g^{-1}}\label{alpha_G}
\end{align}
for each point $m'=[g,b]_H\in U$.

If $(X,\alpha)$ is a section of a $G$-invariant generalized
distribution $\D$, the section $(X_G,\alpha_G)$ is a $G$-invariant section
of $\D$. 

If $f$ is a smooth function defined in the tube $U$ of the action of $G $
at $m \in M$, define its $G$-invariant average $f_G$ by
\[
f_G([g,b]_H):=\int_{h\in H}f([h,b]_H)d\lambda^H.
\]

Because the action of $G$ on $M$ is proper, there exists a $G$-invariant 
Riemannian metric $\varrho$ on $M$ (see \cite{DuKo00}).
Let $Q$ be a connected component of an isotropy type manifold $M_H$, 
$H$ compact subgroup of $G$. Then $Q$ is an embedded submanifold 
of $M$. We write  $TM\an{Q}=TQ\oplus TQ^\varrho$ with $TQ^\varrho$
the subbundle of $TM\an{Q}$ orthogonal to $TQ$ (seen as a subbundle of
$TM\an{Q}$) relative to $\varrho$.

\begin{lemma}\label{average_is_projection}
The $\varrho$-orthogonal projection of $v_m\in T_mM$ onto $T_mQ$ is equal to its
$G$-invariant average $\int_HT_m\Phi_h(v_m)\,d \lambda^H$ at $m\in Q$. The
composition
of $\alpha_m\in T_m^*Q$ with $\pr_{T_mQ}$ is equal to the average 
$\int_H(T_m\Phi_h)^*(\alpha_m)\,d \lambda^H$ at $m\in Q$.
\end{lemma}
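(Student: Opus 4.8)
The plan is to show that the single linear endomorphism
$P:=\int_H T_m\Phi_h\,d\lambda^H\colon T_mM\to T_mM$
coincides with the $\varrho$-orthogonal projection $\pr_{T_mQ}$, and then to obtain the cotangent statement by transposition. This map is a well-defined endomorphism of $T_mM$: since $m\in Q\subseteq M_H$ we have $H=G_m$, so each $h\in H$ fixes $m$ and $T_m\Phi_h$ sends $T_mM$ to $T_{hm}M=T_mM$; moreover, as $\varrho$ is $G$-invariant, every $\Phi_h$ with $h\in H$ is an isometry fixing $m$, so each $T_m\Phi_h$ lies in the orthogonal group of $(T_mM,\varrho_m)$.

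First I would verify that $P$ is a projection onto the fixed subspace $(T_mM)^H:=\{v\in T_mM\mid T_m\Phi_h(v)=v\ \forall h\in H\}$. Left invariance of $\lambda^H$ together with $T_m\Phi_{h'}\circ T_m\Phi_h=T_m\Phi_{h'h}$ gives $T_m\Phi_{h'}\circ P=P$ for all $h'\in H$, so $\operatorname{im}P\subseteq(T_mM)^H$; and $\int_Hd\lambda^H=1$ gives $P\an{(T_mM)^H}=\Id$, whence $P^2=P$ and $\operatorname{im}P=(T_mM)^H$. Orthogonality is the next step: for $w\in(T_mM)^H$ and any $v\in T_mM$, using that each $T_m\Phi_h$ is a $\varrho_m$-isometry fixing $w$ one gets $\varrho_m(T_m\Phi_h v,w)=\varrho_m(v,w)$ for every $h$, and integrating yields $\varrho_m(Pv,w)=\varrho_m(v,w)$, i.e. $v-Pv\perp(T_mM)^H$. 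Hence $P$ is precisely the $\varrho$-orthogonal projection onto $(T_mM)^H$.

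It remains to identify $(T_mM)^H$ with $T_mQ$. The inclusion $T_mQ\subseteq(T_mM)^H$ is immediate: since $G_q=H$ for every $q\in Q$, each $h\in H$ fixes all of $Q$ pointwise, so $\Phi_h\an{Q}=\Id_Q$ and thus $T_m\Phi_h\an{T_mQ}=\Id$. For the reverse inclusion I would use that, $\varrho$ being $H$-invariant, the exponential map is $H$-equivariant, $\Phi_h(\exp_m v)=\exp_m(T_m\Phi_h v)$; thus for $v\in(T_mM)^H$ the curve $t\mapsto\exp_m(tv)$ lies in the fixed-point set $M^H$. The Tube Theorem shows that every isotropy group of a point in a tube at $m$ is subconjugate to $H$, which forces $M_H$ to be open in $M^H$; therefore this curve lies in $M_H$, hence in the connected component $Q$, for small $t$, and so $v=\left.\frac{d}{dt}\right|_{t=0}\exp_m(tv)\in T_mQ$. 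Combining the two inclusions gives $P=\pr_{T_mQ}$, which is the first assertion.

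Finally, the cotangent statement is the transpose of the first. Regarding $\alpha_m\in T_m^*Q$ as a covector on $T_mM$ via an arbitrary extension (the outcome will not depend on it), for every $v_m\in T_mM$ the linearity of $\alpha_m$ lets me pull the average out of the pairing: $\bigl(\int_H(T_m\Phi_h)^*\alpha_m\,d\lambda^H\bigr)(v_m)=\int_H\alpha_m(T_m\Phi_h v_m)\,d\lambda^H=\alpha_m(Pv_m)=\alpha_m(\pr_{T_mQ}v_m)$. Since the right-hand side depends only on $\alpha_m\an{T_mQ}$, this proves $\int_H(T_m\Phi_h)^*\alpha_m\,d\lambda^H=\alpha_m\circ\pr_{T_mQ}$. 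The main obstacle is the reverse inclusion $(T_mM)^H\subseteq T_mQ$: the averaging and orthogonality computations are routine, whereas matching the fixed subspace of the isotropy representation with the tangent space of the isotropy type manifold genuinely rests on the slice/tube description of the proper action and on the openness of $M_H$ in $M^H$.
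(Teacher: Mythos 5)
Your proof is correct, but it takes a genuinely different route from the paper's. The paper argues with vector fields rather than pointwise: it splits $v_m=v_m^\top+v_m^\varrho$, extends $v_m^\top$ to a section $X^\top$ of the distribution $\T_G$ of \eqref{T_G} (whose leaves are precisely the connected components of the isotropy type manifolds, by \cite{OrRa04}) and $v_m^\varrho$ to a vector field $X^\varrho$ with $X^\varrho\an{Q}\in\Gamma(TQ^\varrho)$, then averages both in a tube: the average of $X^\varrho$ is $G$-invariant, hence lies in $\Gamma(\T_G)$ and is tangent to $Q$, but by $G$-invariance of $\varrho$ it also remains a section of $TQ^\varrho$, hence vanishes along $Q$; the average of $X^\top$ fixes $v_m^\top$ because $H$ acts trivially on $Q$. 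Your argument instead never leaves the single fiber $T_mM$: averaging the isotropy representation gives the $\varrho_m$-orthogonal projection onto the fixed subspace $(T_mM)^H$ (standard compact-group linear algebra), and all the geometric content is isolated in the identification $(T_mM)^H=T_mQ$, which you prove via equivariance of the Riemannian exponential map under the isometries $\Phi_h$ and openness of $M_H$ in the fixed-point set $M^H$, extracted from the Tube Theorem. The paper's route buys reuse of machinery it needs anyway (the distribution $\T_G$ and tube-averaging of vector fields), at the price of invoking the nontrivial integrability result of \cite{OrRa04}; your route is more elementary and self-contained, making explicit the standard fact that $T_mQ$ is the fixed subspace of the isotropy representation --- which is the same geometric input in disguise. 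Two small points: the step ``isotropy subconjugate to $H$ and containing $H$ forces equality with $H$'' silently uses compactness (a compact Lie group is not conjugate to a proper subgroup of itself; see \cite{DuKo00}), and should be said; and your explicit remark that $\alpha_m\in T_m^*Q$ must be extended to $T_m^*M$, with the result independent of the extension, actually cleans up a type mismatch that the paper's own formulation leaves implicit. Both proofs obtain the covector statement identically, by transposing the tangent statement through the linearity of the integral.
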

\begin{proof}
Recall that $\T_G$ is the (completely integrable
in the sense of Stefan and Sussmann) tangent distribution on $M$ that is spanned by the $G$-invariant 
vector fields 
on $M$
and that $Q$ is a leaf of $\T_G$; see \eqref{T_G} and the considerations following it.

The vector $v_m$ can be written as an orthogonal sum $v_m=v_m^\top+v_m^\varrho$ with
$v_m^\top\in T_mQ$ and $v_m^\varrho\in T_mQ^\varrho$.
Then we find smooth (without loss of generality global) 
vector fields $X^\top\in\Gamma(\T_G)$ and $X^\varrho\in\mx(M)$ such that
  $X^\varrho\an{Q}\in\Gamma(TQ^\varrho)$ ($X^\top\an{Q}\in\Gamma(TQ)$ by
the properties of $\T_G$),
and with values $X^\top(m)=v^\top_m$ and $X^\varrho(m)=v_m^\varrho$.
Consider the $G$-invariant averages $X_G^\top$ and $X_G^\varrho$
of $X^\top$ and $X^\varrho$ in a tube centered at $m$. Then 
$X_G^\varrho$ is $G$-invariant, $X_G^\varrho\in\Gamma(\T_G)$, 
 and we get $X_G^\varrho\an{Q}\in\Gamma(TQ)$. But since the metric 
 $\varrho$ is $G$-invariant,
the orthogonal space $TQ^\varrho$ is $G$-invariant and the average 
$X_G^\varrho\an{Q}$ remains a section of $TQ^\varrho$. Hence, it 
must be the zero section.
In the same manner, we have $X_G^\top\in\mx(M)^G$ and thus,
$X_G^\top\an{Q}$ remains tangent to $Q$. In particular,
we get at the point $m=[e,0]_H$:
\begin{align*}
X_G^\top(m)=\int_HT_m\Phi_{h\inv}X^\top(m)d\lambda^H
=\int_HT_m\Phi_{h\inv}(v^\top_m)d\lambda^H=\int_Hv^\top_md\lambda^H=v^\top_m.
\end{align*}
The third equality is proved in the following way. Since $v_m^\top$ is tangent to $Q$, there exists a curve
$c:(-\varepsilon,\varepsilon)\to Q\subseteq M_H$
such that $c(0)=m$ and $\dot c(0)=v_m^\top$.
We have then $T_m\Phi_{h\inv}\left(v_m^\top\right)
=\left.\frac{d}{dt}\right\an{t=0}\left(\Phi_{h\inv}\circ c\right)(0)
=\dot{c}(0)=v_m^\top$ for all $h\in H$.
This leads to $\int_HT_m\Phi_h(v_m)\,d \lambda^H
=X_G^\top(m)+X_G^\varrho(m)=v_m^\top+0=\pr_{T_mQ}(v_m)$.

Choose  now $\alpha_m\in T_m^*M$. Then the $G$-invariant average 
of $\alpha_m$ at $m$ is equal
to
\begin{align*}
\alpha_m=\int_H\alpha_m\circ T_m\Phi_hd\lambda^H
=\alpha_m\circ \int_HT_m\Phi_hd\lambda^H=\alpha_m\circ\pr_{T_mQ},
\end{align*}
as follows from the first statement.
\qed\end{proof}

\section{Induced Dirac structures on the isotropy type manifolds}
Let $(M,\mathsf D)$ be a smooth Dirac manifold
with a smooth proper Dirac action of a Lie group $G$
on it. Let $Q$ be a connected component of an isotropy type 
$M_H$, for a compact subgroup $H\subseteq G$. Then $Q$
is an embedded submanifold of $M$.
We denote by $\iota_Q:Q\hookrightarrow M$ the inclusion.

\subsection{Dirac structures on connected components of isotropy type 
submanifolds}

We show in this subsection that $\mathsf D$ induces a 
Dirac structure $\mathsf D_Q$ on $Q$. Then, we will study 
in the next subsection the induced action
of $N_Q$ (defined by \eqref{N_Q}) on $Q$.

\begin{theorem}\label{thm1}
Define
$\mathsf D_{Q}\subseteq \mathsf P_{Q}$ by
\begin{equation}
\label{dq}
\mathsf D_{Q}(q)
=\left\{
(\tilde v_q, \tilde\alpha_q)\in\mathsf P_Q(q)\left|
\begin{array}{c}
\exists\, (v_q,\alpha_q)\in\mathsf D(q)\,\text{ such that }\\
T_q\iota_Q\tilde v_q=v_q \text{ and }
(T_q\iota_Q)^*\alpha_q=\tilde\alpha_q
\end{array}\right.\right\}
\end{equation}
for all $q\in Q$, i.e., $\mathsf D_Q$ is the \emph{backward Dirac image
of $\mathsf D$ under $\iota_Q$}.
Then $\mathsf D_Q$ is a Dirac structure on $Q$.
If $(M,\mathsf D)$ is integrable, then $(Q,\mathsf D_Q)$ is integrable.
\end{theorem}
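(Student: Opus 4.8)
The plan is to recognise $\mathsf D_Q$ as a smooth Lagrangian subbundle of $\mathsf P_Q$ and then, separately, to propagate integrability. I would first dispose of the fiberwise and rank issues. The pointwise computation already carried out above for the pull-back of a Dirac structure under an arbitrary smooth map $\phi$ applies verbatim to $\phi=\iota_Q$ and shows that each fiber $\mathsf D_Q(q)=(\iota_Q^*\mathsf D)(q)$ is Lagrangian in $\mathsf P_Q(q)=T_qQ\operp T_q^*Q$. Since every Lagrangian subspace of a pairing space of signature $(\dim Q,\dim Q)$ has dimension $\dim Q$, the generalized distribution $\mathsf D_Q$ automatically has constant rank $\dim Q$. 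Consequently it suffices to show that $\mathsf D_Q$ is \emph{smooth}: a smooth generalized distribution of constant rank is a subbundle, and being fiberwise Lagrangian it is then a Dirac structure for which $\iota_Q$ is, by construction, a backward Dirac map.

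The crux is therefore smoothness, and here I would use the $G$-invariant average. Fix $q\in Q$ and $(\tilde v_q,\tilde\alpha_q)\in\mathsf D_Q(q)$; by \eqref{dq} there is $(v_q,\alpha_q)\in\mathsf D(q)$ with $v_q=T_q\iota_Q\tilde v_q\in T_qQ$ and $(T_q\iota_Q)^*\alpha_q=\tilde\alpha_q$, which I extend to a local section $(X,\alpha)\in\Gamma(\mathsf D)$ taking this value at $q$ (possible since $\mathsf D$ is a smooth subbundle). Passing to the $G$-invariant average $(X_G,\alpha_G)$ on a tube at $q$ keeps us in $\Gamma(\mathsf D)$ because $\mathsf D$ is $G$-invariant, and now $X_G\in\mx(M)^G\subseteq\Gamma(\T_G)$. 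Since $Q$ is a leaf of $\T_G$, the restriction $X_G\an Q$ is tangent to $Q$, so $(\tilde X,\tilde\beta):=(X_G\an Q,\iota_Q^*\alpha_G)$ is a smooth local section of $\mathsf P_Q$ taking values in $\mathsf D_Q$. It only remains to verify that it realises the prescribed value at $q$: the isotropy at $q$ is $H$, so $h\cdot q=q$ for all $h\in H$, and the averaging formulas reduce at $q$ to $X_G(q)=\int_HT_q\Phi_{h\inv}(v_q)\,d\lambda^H$ and $\alpha_G(q)=\alpha_q\circ\int_HT_q\Phi_h\,d\lambda^H$. By Lemma \ref{average_is_projection} (and bi-invariance of $\lambda^H$) these equal $\pr_{T_qQ}(v_q)=v_q$ and $\alpha_q\circ\pr_{T_qQ}$; restricting the latter to $T_qQ$ returns $\tilde\alpha_q$. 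Hence $(\tilde X(q),\tilde\beta(q))=(\tilde v_q,\tilde\alpha_q)$, proving smoothness.

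For integrability I would exploit that the sections $(X_G\an Q,\iota_Q^*\alpha_G)$ constructed above span $\mathsf D_Q$ pointwise near every $q$, so they contain a local frame. For two such sections, coming from $G$-invariant $(X_1,\alpha_1),(X_2,\alpha_2)\in\Gamma(\mathsf D)$ whose vector parts are tangent to $Q$, the relations $X_i\sim_{\iota_Q}\tilde X_i$ together with naturality of the Lie bracket, Lie derivative, interior product and exterior derivative under pullback give
\begin{equation*}
[(\tilde X_1,\tilde\beta_1),(\tilde X_2,\tilde\beta_2)]
=\left([X_1,X_2]\an Q,\ \iota_Q^*\left({\boldsymbol{\pounds}}_{X_1}\alpha_2-\ip{X_2}\dr\alpha_1\right)\right),
\end{equation*}
that is, the Courant--Dorfman bracket \eqref{Courant_bracket} of the restrictions is the restriction of $[(X_1,\alpha_1),(X_2,\alpha_2)]\in\Gamma(\mathsf D)$, whose vector part $[X_1,X_2]$ is again tangent to $Q$; hence it lies in $\Gamma(\mathsf D_Q)$. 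To pass from the frame to arbitrary sections I would invoke the Leibniz identity for the Courant--Dorfman bracket, which on the (mutually isotropic) sections of $\mathsf D_Q$ reduces to $[(\tilde X,\tilde\alpha),f(\tilde Y,\tilde\beta)]=f[(\tilde X,\tilde\alpha),(\tilde Y,\tilde\beta)]+(\tilde Xf)(\tilde Y,\tilde\beta)$ and the analogous first-slot rule (the anomalous term is proportional to the pairing, which vanishes on $\mathsf D_Q$, where this bracket also agrees with the skew one of \eqref{wrong_bracket}); thus closedness on a spanning family propagates to all local sections, giving $[\Gamma(\mathsf D_Q),\Gamma(\mathsf D_Q)]\subseteq\Gamma(\mathsf D_Q)$.

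The main obstacle is the smoothness step: for a general submanifold the backward image of a Dirac structure may fail to be smooth, and it is precisely the interplay of $G$-invariant averaging, the identification of $Q$ as a leaf of $\T_G$, and Lemma \ref{average_is_projection} that forces smoothness here. By comparison, the Lagrangian and constant-rank reduction is immediate from the pointwise computation already in the text, and the integrability argument is essentially the naturality of the Courant--Dorfman bracket under restriction to $Q$ combined with standard module/Leibniz bookkeeping.
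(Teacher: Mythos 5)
Your proposal is correct, and its core coincides with the paper's own proof: both reduce the problem to smoothness of $\mathsf D_Q=\iota_Q^*\mathsf D$ (the fiberwise Lagrangian property and constant rank coming from the pointwise pullback computation already in the text), and both establish smoothness by extending a prescribed value $(v_q,\alpha_q)\in\mathsf D(q)$ to a local section of $\mathsf D$, averaging over $H$ in a tube, using that the $G$-invariant vector part is tangent to the leaf $Q$ of $\T_G$, and invoking Lemma \ref{average_is_projection} to recover the prescribed value at $q$. (A small point in your favor: at the covector step you only claim that $\alpha_q\circ\pr_{T_qQ}$ pulls back to $\tilde\alpha_q$ on $T_qQ$, which is all that is needed and holds for an arbitrary choice of $\alpha_q$; the paper asserts $\alpha_G(q)=\alpha_q$ outright, which presupposes $\alpha_q\in\left((T_qQ)^\varrho\right)^\circ$.) Where you genuinely diverge is integrability: the paper disposes of it in one line by citing Lemma 2.2 of \cite{StXu08}, which says that a backward Dirac map transfers integrability, whereas you prove it directly — the averaged sections $(X_G\an{Q},\iota_Q^*\alpha_G)$ furnish a local frame of $\mathsf D_Q$, the Dorfman bracket of two such restrictions is the restriction of the ambient Dorfman bracket (naturality of $[\cdot,\cdot]$, ${\boldsymbol{\pounds}}$, $\ip{}$, $\dr$ under $\iota_Q$ for $\iota_Q$-related vector fields), and closedness propagates from the frame to all sections by the Leibniz rules, whose anomalous term is killed by isotropy of $\mathsf D_Q$. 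Your argument is longer but self-contained and reuses exactly the sections built in the smoothness step; the paper's citation is shorter but imports an external result. Both are valid.
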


\begin{proof}
Since $\mathsf D_Q$ is, by definition,  equal
to $\iota_Q^*\mathsf D$ (see \eqref{pullback} and 
the considerations following this equation), 
we have only to check that  
 \eqref{dq} defines a smooth generalized distribution in $\mathsf P_Q$.
Choose $q\in Q$ and $(\tilde v_q, \tilde\alpha_q)\in\mathsf D_Q(q)$.
For simplicity, we also write $q$ for $\iota_Q(q)\in M$.
 Then we find 
$(v_q,\alpha_q)\in\mathsf D_Q(q)$ such that 
$T_q\iota_Q\tilde v_q=v_q\in T_qQ\subseteq T_qM$ and 
$(T_q\iota_Q)^*\alpha_q=\tilde\alpha_q$. Since $\mathsf D$ is a smooth vector bundle 
on $M$, there exists $(X,\alpha)\in\Gamma(\mathsf D)$ with $q\in\dom(X,\alpha)$ such that
$(X,\alpha)(q)=(v_q,\alpha_q)$.
Consider the $G$-invariant average
$(X_G,\alpha_G)$ of the
pair $(X,\alpha)$ in a tube in $M$ centered at $q$. Since $\mathsf D$ is
invariant under the action of $G$ on $M$, we have
$(X_G,\alpha_G)\in\Gamma(\mathsf D)$ and since it is $G$-invariant,
$X_G\an{Q}$ is tangent to 
the connected component $Q$ of the isotropy type manifold $M_H$. 
This shows that $X_G\in\mx(M)$ is such that there exists $\tilde X\in\mx(Q)$ with
$\tilde X\sim_{\iota_Q} X_G$. By definition of $\mathsf D_Q$, the pair
$(\tilde X, \iota_Q^*\alpha_G)$ is a section of $\mathsf D_Q$.
Furthermore, we have 
\begin{align*}
X_G(q)&=\int_H(\Phi_h^*X)(q)d\lambda^H=\int_HT_q\Phi_{h^{-1}}X(q)d\lambda^H\\
&=\int_HT_q\Phi_{h^{-1}}(v_q)d\lambda^H=\pr_{T_qQ}(v_q)=v_q
\end{align*}
by Lemma \ref{average_is_projection} since $v _q\in T_qQ$. 
This leads to $\tilde X(q)=\tilde v_q$.
In the same manner,
$\alpha_G(q)=\alpha_q \circ \pr_{T_qQ} = \alpha_q$ since $\alpha_q 
\in \left((T_qQ)^\varrho \right) ^ \circ $
and thus $(\iota_Q^*\alpha_G)(q)=\alpha_G(q)\circ T_q\iota_Q=\alpha_q\circ T_q\iota_Q=\tilde\alpha_q$.
 Hence, we have found a smooth section
$(\tilde X,\iota_Q^*\alpha_G)$ of 
$\mathsf D_Q$ whose value at $q $ is  
$(\tilde v_q,\tilde \alpha_q)$. 

\medskip

Since $\iota_Q:(Q,\mathsf D_Q)\to(M,\mathsf D)$ is a backward Dirac map, we know using 
for instance Lemma 2.2 in \cite{StXu08} that 
$\mathsf D_Q$ is integrable if $\mathsf D$ is integrable. 
\qed\end{proof}

\begin{remark}
In the situation of the previous theorem,
one can show with the same methods as in the proof of the smoothness of $\mathsf D_Q$ that the intersection 
\begin{equation}\label{intersection}
\mathsf D\cap(TQ\operp(TQ^\varrho)^\circ)
\end{equation} is smooth. It has hence constant 
rank on $Q$ by Proposition \ref{prop_intersection}. This intersection
is then a Dirac structure in $TQ\operp(TQ^\varrho)^\circ$. 
The Dirac structure $\mathsf D_Q$ can be seen as the pullback of this intersection 
via the identification of $\mathsf P_Q$ with $TQ\operp(TQ^\varrho)^\circ$.

Assume that $N$ is an embedded submanifold of a manifold $M$ endowed with a Dirac structure
$\mathsf D_M$. It was already shown in \cite{Courant90a} that 
if $\mathsf D_M\cap(TN\operp T^*M\an{N})$ has constant rank, then 
there is an induced
Dirac structure on $N$ defined as in \eqref{dq} and
such that the inclusion map $N\hookrightarrow M$ is a backward Dirac map.
The hypothesis in \cite{Courant90a}
ensures that the bundle 
defined by \eqref{dq} is smooth. In the present situation
we cannot use this known
result of \cite{Courant90a} for the proof of 
Theorem \ref{thm1}. On the other hand, averaging techniques  
prove that the bundle defined by \eqref{dq} is smooth. We could also have shown first that \eqref{intersection}
is smooth (and has hence constant rank) and then  that its pullback to $Q$ is a Dirac structure on $Q$.
\end{remark}

\subsection{Induced Dirac structures on the quotients}
Let $G $ act on the Dirac manifold $(M,\mathsf D)$ properly and 
canonically. Let $Q$ be a connected component of the orbit type
manifold $M_H$, for $H$ a compact subgroup of $G$.  
In Theorem \ref{thm1}, we have shown that there is an induced Dirac structure 
defined by \eqref{dq} on $Q$ such that $\iota_Q:(Q,\mathsf D_Q)\to (M,\mathsf D)$ is a backward Dirac map.
We will show here that the action of $N_Q$ on $(Q,\mathsf D_Q)$ is a  proper Dirac action 
that satisfies the conditions for regular reduction.

From the proof of Theorem \ref{thm1}, 
we can see that each pair $(\tilde v_q,\tilde\alpha_q)\in\mathsf D_Q(q)$
corresponds to a unique pair $(v_q,\alpha_q)\in\mathsf D(q)\cap(T_qQ\cap(T_qQ^\varrho)^\circ)$ such that
$T_q\iota_Q\tilde v_q=v_q$ and $\tilde \alpha_q=(T_q\iota_Q)^*\alpha_q$.
The converse is also true by definition of $\mathsf D_Q(q)$.
Thus, the map
$$ 
\begin{array}{lcclc}
I_q:&\mathsf D_Q(q)
&\longrightarrow& \mathsf D(q)\cap\left(T_qQ\times\left(T_qQ^\varrho\right)^\circ\right)
&(\subseteq T_qM\times T_q^*M)\end{array}$$
sending each $(\tilde v_q,\tilde\alpha_q)$ to the corresponding $(v_q,\alpha_q)$
is an isomorphism of vector spaces 
for all $q\in Q$.

We will use the maps $I_q$, $q\in Q$, as a technical tool in the proof of the following lemma. 
Recall that $\T$ is the smooth tangent distribution
defined as the sum of $\T_G$ and $\V$ (see \eqref{T_G} and \eqref{T}).

\begin{lemma}\label{intersection_has_constant_rank_cor}
Let $N_Q$
be the subgroup of $N(H)$ leaving $Q$ invariant 
(see \eqref{N_Q}), 
denote by $\V_Q$ the vertical distribution  of the induced proper action of $N_Q$ on $Q$, and
set $\K_Q:=\V_Q\operp\{0\}\subseteq \mathsf P_Q$.
If $\mathsf D\cap(\T\operp\V_G^\circ)$ is smooth, then 
the intersection $\mathsf D_Q\cap\K_Q^\perp$ has constant rank on $Q$.
\end{lemma}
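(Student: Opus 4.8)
The plan is to prove that $\mathsf D_Q\cap\K_Q^\perp$ is \emph{smooth} and then to invoke Proposition~\ref{prop_intersection} to upgrade smoothness to constant rank. Since the induced action of $N_Q/H$ on $Q$ is free and proper, the vertical distribution $\V_Q$ has locally constant rank $\dim N_Q-\dim H$, so that $\K_Q=\V_Q\operp\{0\}$ and its orthogonal $\K_Q^\perp=TQ\operp\V_Q^\circ$ are genuine subbundles of $\mathsf P_Q$. Together with the Lagrangian (hence self-orthogonal) Dirac subbundle $\mathsf D_Q=\mathsf D_Q^\perp$ this places us exactly in the hypotheses of Proposition~\ref{prop_intersection}, with $\Delta_1=\mathsf D_Q$ and $\Delta_2=\K_Q$, so that $\Delta_1^\perp\cap\Delta_2^\perp=\mathsf D_Q\cap\K_Q^\perp$ and conditions \emph{(i)} and \emph{(iv)} are equivalent; it therefore suffices to establish smoothness.

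To produce smooth sections I would mimic the averaging argument from the proof of Theorem~\ref{thm1}. Fix $q\in Q$ and $(\tilde v_q,\tilde\alpha_q)\in\mathsf D_Q(q)\cap\K_Q^\perp(q)$, and let $(v_q,\alpha_q)=I_q(\tilde v_q,\tilde\alpha_q)\in\mathsf D(q)$, so that $v_q\in T_qQ$, $\alpha_q\in(T_qQ^\varrho)^\circ$, and the condition $\tilde\alpha_q\in\V_Q^\circ$ gives $\alpha_q(\xi_M(q))=0$ for every $\xi$ in the Lie algebra $\lie{n}_Q$ of $N_Q$. The central claim is that in fact $(v_q,\alpha_q)\in\mathsf D(q)\cap(\T(q)\operp\V_G^\circ(q))$, since $v_q\in T_qQ=\T_G(q)\subseteq\T(q)$. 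Granting this, the hypothesis furnishes a smooth local section $(X,\alpha)$ of $\mathsf D\cap(\T\operp\V_G^\circ)$ with $(X,\alpha)(q)=(v_q,\alpha_q)$; its $G$-invariant average $(X_G,\alpha_G)$ is again a section of this $G$-invariant distribution. Being $G$-invariant, $X_G\in\Gamma(\T_G)$, so $X_G\an{Q}$ is tangent to $Q$ and descends to $\tilde X\in\mx(Q)$ with $\tilde X\sim_{\iota_Q}X_G$, while $\alpha_G\in\Gamma(\V_G^\circ)\subseteq\Gamma(\V^\circ)$ annihilates $\V\supseteq\V_Q$. Hence $(\tilde X,\iota_Q^*\alpha_G)$ is a smooth section of $\mathsf D_Q\cap\K_Q^\perp$, and Lemma~\ref{average_is_projection}, applied exactly as at the end of the proof of Theorem~\ref{thm1}, gives $X_G(q)=\pr_{T_qQ}(v_q)=v_q$ and $\alpha_G(q)=\alpha_q\circ\pr_{T_qQ}$, so that this section takes the value $(\tilde v_q,\tilde\alpha_q)$ at $q$.

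The main obstacle is the central claim $\alpha_q\in\V_G^\circ(q)$. First, Lemma~\ref{average_is_projection} identifies the $H$-average of a covector with its composition with $\pr_{T_qQ}$, so the condition $\alpha_q\in(T_qQ^\varrho)^\circ$ is precisely the $H$-invariance of $\alpha_q$. Next I would show that $\alpha_q$ annihilates the entire orbit direction $\V(q)=T_q(G\cdot q)$ and not merely $\V_Q(q)$: for $\xi\in\lie g$, using $hq=q$ for $h\in H$, one computes $\pr_{T_qQ}(\xi_M(q))=\int_H T_q\Phi_h(\xi_M(q))\,d\lambda^H=\bar\xi_M(q)$ with $\bar\xi=\int_H\operatorname{Ad}_h\xi\,d\lambda^H$; since $\bar\xi$ is $\operatorname{Ad}_H$-invariant it lies in $\lie{n}_Q$, whence $\pr_{T_qQ}(\V(q))\subseteq\V_Q(q)$. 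Combined with $H$-invariance this yields $\alpha_q(w)=\alpha_q(\pr_{T_qQ}w)=0$ for all $w\in\V(q)$. Finally, an $H$-invariant covector annihilating $\V(q)$ is the differential $\dr f_q$ of a $G$-invariant function: one extends the induced $H$-invariant linear functional on a slice to a $G$-invariant function through the tube $G\times_H B$. Thus $\alpha_q\in\V_G^\circ(q)$. This pointwise identification of $\V_G^\circ(q)$ with the $H$-fixed annihilator of $\V(q)$, together with the $\operatorname{Ad}$-averaging computation, is the delicate point; the remainder is the by-now-standard averaging machinery.
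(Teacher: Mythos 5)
Your proof is correct, and its skeleton is the same as the paper's: show that $\mathsf D_Q\cap\K_Q^\perp$ is smooth by producing, through each point of each fiber, a section coming (via the maps $I_q$) from a $G$-invariant section of $\mathsf D\cap(\T_G\operp\V_G^\circ)$ restricted to $Q$, and then apply Proposition \ref{prop_intersection} to the subbundles $\mathsf D_Q$ and $\K_Q$ to convert smoothness into constant rank. The differences are in how the two key ingredients are obtained, and both of your substitutes are sound. Where the paper cites \cite{JoRa10b} for the fact that smoothness of $\mathsf D\cap(\T\operp\V_G^\circ)$ forces smoothness of $\mathsf D\cap(\T_G\operp\V_G^\circ)$, you inline the argument: you average a section of the $G$-invariant distribution $\mathsf D\cap(\T\operp\V_G^\circ)$ and note that its first component, being $G$-invariant, lies in $\Gamma(\T_G)$. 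More substantially, the central claim that $I_q$ carries $\mathsf D_Q(q)\cap\K_Q^\perp(q)$ into $\mathsf D(q)\cap\left(T_qQ\times\V_G^\circ(q)\right)$ is proved by a genuinely different route. The paper works through the quotient: it quotes Theorem 2.5.10 of \cite{OrRa04} to write $\tilde\alpha_q=\dr\tilde f_q$ with $\tilde f\in C^\infty(Q)^{N_Q}$, extends $\tilde f$ off $Q$, averages to a $G$-invariant $F$, and identifies $\dr F_q=\alpha_q$ by uniqueness inside $(T_qQ^\varrho)^\circ$. You work pointwise on $M$: Lemma \ref{average_is_projection} converts the condition $\alpha_q\in(T_qQ^\varrho)^\circ$ into $H$-invariance of $\alpha_q$; the computation $\pr_{T_qQ}(\xi_M(q))=\bar\xi_M(q)$ with $\bar\xi=\int_H\operatorname{Ad}_h\xi\,d\lambda^H$, which is $\operatorname{Ad}_H$-fixed and hence lies in $\lie{n}_Q$ (since $\exp(t\bar\xi)$ centralizes $H$ and $N_Q$ is open in $N(H)$), shows that $\alpha_q$ annihilates all of $\V(q)$ and not merely $T_q\iota_Q(\V_Q(q))$; and the tube theorem lets you build a $G$-invariant function whose differential at $q$ is exactly $\alpha_q$. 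In effect you re-prove, at the point $q$, the identification of $\V_G^\circ(q)$ with the $H$-fixed annihilator of $\V(q)$, which is precisely the content the paper imports from \cite{OrRa04}. Your version is longer but self-contained and makes the fixed-point linear algebra explicit; the paper's is shorter at the price of the two external citations.
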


\begin{proof}
Consider the restriction of the map $I_q$ defined above
to the set $\mathsf D_Q(q)\cap\K_Q^\perp(q)$.
We show that if $\tilde
\alpha_q\in(\V_Q)^\circ(q)$,
then $\alpha_q\in\V^\circ_G(q)$. 
Since the action of $N_Q$ on $Q$ is  proper with fixed isotropy $H $, 
the vertical space $\V_Q$ is a smooth vector bundle on $Q$,
and we have $(\V_Q)^\circ(q)=(\V_Q(q))^\circ=(T_q(N_Q\cdot q))^\circ$.
But since the action of $H$ on $Q$ is trivial, we have
also  $\left((T_q(N_Q\cdot q))^\circ\right)^H=(T_q(N_Q\cdot q))^\circ$.
Using Theorem 2.5.10 in \cite{OrRa04}
we find then 
\[(\V_Q)^\circ(q)=\left((T_q(N_Q\cdot q))^\circ\right)^H
=\left\{\dr f(q)\mid f\in C^\infty(Q)^{N_Q} \right\}.
\]
If $\tilde\alpha_q\in(\V_Q)^\circ(q)$, we find hence 
a smooth $N_Q$-invariant function $\tilde f\in C^\infty(Q)^{N_Q}$ such that
$\dr \tilde f_q=\tilde \alpha_q$. 
Since $Q$ is an embedded submanifold of $M$, there exists 
then a smooth function $f\in C^\infty(M)$ such that
$\iota_Q^*f=\tilde f$. Consider the $G$-invariant average 
$F$ of $f$ at $q$. Since $\tilde f$ was $N_Q$-invariant,
we still have $\iota_Q^*F=\tilde f$. Therefore, 
$\dr F_q\circ T_q\iota_Q=(\iota_Q^*\dr F)(q)=\dr \tilde f_q $
and 
$\dr F\in\Gamma(\V^\circ)^G$, that is, $\dr F_q\in\V_G^\circ(q)$.
Also, since $\dr F$ is $G$-invariant, we have 
$\dr F_q=\dr F_q\circ \pr_{TqQ}$ by Lemma \ref{average_is_projection} and thus
$\dr F_q\in 
\left(T_qQ^\varrho\right)^\circ$. The covector 
$\dr F_q$ is hence the unique element of $\left(T_qQ^\varrho\right)^\circ$
satisfying $\tilde\alpha_q=\dr F_q\circ T_q\iota_Q$. Thus $\dr F_q$ is equal
to the covector $\alpha_q$ and we have shown that $\alpha_q\in\V_G^\circ(q)$. Hence we get 
$$
I_q\an{\mathsf D(q)\cap\K_Q^\perp(q)}:
\mathsf D_Q(q)\cap\K_Q^\perp(q)\to \mathsf D(q)\cap(T_qQ\times \V_G^\circ(q)).
$$
This map is obviously surjective since  $ \V_G^\circ\an{Q}\subseteq (TQ^\varrho)^\circ$
because $\V_G^\circ$ is spanned by $G$-invariant sections.

\medskip

Since $\mathsf D\cap (\T\operp\V_G^\circ)$ is smooth by hypothesis, it follows
by $G$-invariant averaging that
$\mathsf D\cap(\T_G\operp\V_G^\circ)$ is also smooth (see  \cite{JoRa10b}).

We use this to show  that $\mathsf D_Q\cap\K_Q^\perp$ is smooth. Choose
$q\in Q$ and $(v_q,\alpha_q)\in\mathsf  D_Q(q)\cap\K_Q^\perp(q)$. 
Then $I_q(v_q,\alpha_q)\in 
\mathsf D(q)\cap\left(T_qQ\operp\V_G^\circ(q)\right)$. Since
$\mathsf D\cap\left(TQ\operp\V_G^\circ\an{Q}\right)=
\mathsf D\cap(\T_G\operp\V_G^\circ)\an{Q}$, we find
a smooth section $(X,\alpha)\in\Gamma(\mathsf D\cap(\T_G\operp\V_G^\circ))$
defined on a neighborhood $U$ of $q$ in  $M$ 
such that $(X,\alpha)(q)=I_q(v_q,\alpha_q)$.
Since $(X,\alpha)$ is a section of 
$\mathsf D$ restricting to a section 
of $\mathsf D\cap\left(TQ\operp\V_G^\circ\an{Q}\right)
\subseteq \mathsf D\cap\left(TQ\operp\left(TQ^\varrho\right)^\circ\right)$
on $Q$, we get 
by the proof of Theorem \ref{thm1}
the existence
of a smooth section $(\tilde X,\tilde \alpha)$ of 
$\mathsf D_Q$ such  that
$\tilde X\sim_{\iota_Q}X$ and $\tilde \alpha=\iota_Q^*\alpha$.
But for all $q'\in Q\cap U$, we get
from the considerations above that
$(\tilde X,\tilde \alpha)(q')
=I_{q'}\inv\left((X,\alpha)(q')\right)\in\mathsf D_Q(q')\cap\K_Q^\perp(q')$.
Thus, $(\tilde X,\tilde \alpha)$ is a smooth section of
$\mathsf D_Q\cap\K_Q^\perp$, taking the value
$(\tilde X,\tilde \alpha)(q)=(v_q,\alpha_q)$
at $q$. 

Since $\mathsf D_Q\cap\K_Q^\perp$ is smooth
and $\mathsf D_Q$ and 
$\K_Q^\perp$ are the smooth orthogonal bundles  
of the vector subbundles 
$\mathsf D_Q$ and 
$\K_Q$ of $(\mathsf P_Q,\pairing_{\mathsf P_Q})$, respectively, we can conclude
using  Proposition \ref{prop_intersection}
that $\mathsf D_Q\cap\K_Q^\perp$ has constant rank on $Q$.
\qed\end{proof}
\begin{lemma}\label{thm2}
Let $(M,\mathsf D)$ be a smooth Dirac manifold with a proper smooth Dirac
action of  a Lie group $G$ on it  such that
the intersection $\mathsf D\cap(\T\operp\V_G^\circ)$ is smooth.
The Dirac structure
$\mathsf D_Q$ defined by $\mathsf D$ on $Q$,
as in Theorem \ref{thm1}, is invariant
under the induced action of $N_Q$ (see\eqref{N_Q}) on $Q$
and has the property that
$\mathsf D_{Q}\cap\K_Q^\perp$ 
has constant rank on $Q$.
\end{lemma}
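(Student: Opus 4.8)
The statement has two independent assertions: the $N_Q$-invariance of $\mathsf{D}_Q$, and the constant-rank property of $\mathsf{D}_Q\cap\K_Q^\perp$. The second is nothing but the conclusion of Lemma \ref{intersection_has_constant_rank_cor}, which holds under exactly the hypothesis assumed here (smoothness of $\mathsf{D}\cap(\T\operp\V_G^\circ)$); so I would simply invoke that lemma for this half and spend the real work on the invariance.

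For the invariance, the first thing I would record is the geometry of the restricted action. For $g\in N_Q$ the map $\Phi_g$ sends $Q$ into $Q$, and since $N_Q$ is a subgroup (a union of connected components of $N(H)$) the inverse $g^{-1}$ also lies in $N_Q$; hence $\Phi_g\an Q\colon Q\to Q$ is a diffeomorphism, with inverse $\Phi_{g^{-1}}\an Q$. The only other structural fact needed is that the inclusion intertwines the two actions, $\iota_Q\circ(\Phi_g\an Q)=\Phi_g\circ\iota_Q$, which is immediate.

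The plan is then to combine three inputs: (a) by Theorem \ref{thm1}, $\mathsf{D}_Q=\iota_Q^*\mathsf{D}$ is the backward Dirac image; (b) since the $G$-action is Dirac, each $\Phi_g$ is in particular a backward Dirac self-map of $(M,\mathsf{D})$, and as $\Phi_g$ is a diffeomorphism this says exactly $\Phi_g^*\mathsf{D}=\mathsf{D}$; (c) the pointwise backward image is contravariantly functorial, $(\psi\circ\phi)^*=\phi^*\circ\psi^*$, which one checks directly from the defining formula \eqref{pullback} at the level of Lagrangian fibers. With these in hand the computation is
\[
(\Phi_g\an Q)^*\mathsf{D}_Q
=(\Phi_g\an Q)^*\iota_Q^*\mathsf{D}
=(\iota_Q\circ(\Phi_g\an Q))^*\mathsf{D}
=(\Phi_g\circ\iota_Q)^*\mathsf{D}
=\iota_Q^*\Phi_g^*\mathsf{D}
=\iota_Q^*\mathsf{D}
=\mathsf{D}_Q .
\]
Since $\Phi_g\an Q$ is a diffeomorphism, this identity of backward images shows that $\Phi_g\an Q$ is a backward (equivalently, by the diffeomorphism remark in Section \ref{admsble}, forward) Dirac self-map of $(Q,\mathsf{D}_Q)$ for every $g\in N_Q$; that is precisely the $N_Q$-invariance of $\mathsf{D}_Q$. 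If one prefers to avoid the functoriality shorthand, the same conclusion follows by tracing a pair $(\tilde v_q,\tilde\alpha_q)\in\mathsf{D}_Q(q)$ through a witness $(v_q,\alpha_q)\in\mathsf{D}(q)$, transporting the witness to $\mathsf{D}(g\cdot q)$ by the $G$-invariance of $\mathsf{D}$, and checking through the commuting square that the transported witness certifies membership of the pushed pair in $\mathsf{D}_Q(g\cdot q)$.

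I do not expect a genuine obstacle here. The regularity hypothesis is consumed entirely by the constant-rank half through Lemma \ref{intersection_has_constant_rank_cor}, and the invariance half is formal once the commuting square is in place. The one point to handle with care is that the functoriality in (c) is an identity of Lagrangian subspaces fiber by fiber and does not by itself require smoothness; smoothness — hence the right to regard each pullback as an honest Dirac structure — is supplied separately by the standing hypothesis and by Theorem \ref{thm1}. The analytically substantive work, namely the averaging and the identification $(\V_Q)^\circ\leftrightarrow\V_G^\circ$ along the maps $I_q$, was already carried out in the preceding lemma, so this proof amounts to bookkeeping layered on top of it.
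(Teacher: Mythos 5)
Your proposal is correct. For the constant-rank half you do exactly what the paper does: invoke Lemma \ref{intersection_has_constant_rank_cor}, whose hypothesis is precisely the standing smoothness assumption, so there is nothing to compare there. For the invariance half your route is genuinely different from the paper's. The paper argues through the invariant Riemannian metric: since $\mathsf D$ is $N_Q$-invariant and the $G$-invariance of $\varrho$ makes $TQ$ and $\left(TQ^\varrho\right)^\circ$ invariant under $N_Q$, the intersection $\mathsf D\cap\left(TQ\operp\left(TQ^\varrho\right)^\circ\right)$ --- which is identified with $\mathsf D_Q$ via the maps $I_q$ --- is $N_Q$-invariant, and the action on $(Q,\mathsf D_Q)$ is therefore canonical. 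You instead use only that $\mathsf D_Q=\iota_Q^*\mathsf D$, that a Dirac action by diffeomorphisms satisfies $\Phi_g^*\mathsf D=\mathsf D$ fiberwise, and the contravariant functoriality $(\psi\circ\phi)^*=\phi^*\circ\psi^*$ of the pointwise pullback \eqref{pullback}, combined with the commuting square $\iota_Q\circ(\Phi_g\an{Q})=\Phi_g\circ\iota_Q$ for $g\in N_Q$. Each step checks out: the fiberwise identity $\Phi_g^*\mathsf D=\mathsf D$ does follow from the section-level definition of a symmetry Lie group (both sides are Lagrangian and one inclusion is immediate), and functoriality of \eqref{pullback} is a direct verification. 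What your argument buys is metric-independence --- it never touches $\varrho$ or the splitting $TM\an{Q}=TQ\oplus TQ^\varrho$ --- and it is purely formal, so it would apply verbatim to any invariant embedded submanifold on which the relevant backward images are smooth. What the paper's version buys is economy within its own framework: the maps $I_q$ and the $\varrho$-splitting are already set up and reused in Lemma \ref{intersection_has_constant_rank_cor} and in the final comparison theorem, so the invariance becomes a one-line observation. You also correctly identify the one point needing care in either treatment: smoothness of $\mathsf D_Q$, supplied by Theorem \ref{thm1}, is what upgrades the pointwise identity $(\Phi_g\an{Q})^*\mathsf D_Q=\mathsf D_Q$ to the section-level statement demanded by the definition of a canonical action.
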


\begin{proof}
Recall the construction of $\mathsf D_{Q}$. Since $\mathsf D$ is 
$G$-invariant,
it is $N_Q$-invariant. The connected component $Q$
 of the isotropy type manifold $M_H$ is an 
accessible set of the family of the $G$-invariant vector fields on $M$. 
Since $\varrho$ is also $G$-invariant,
the spaces $TQ$ and $\left(TQ^\varrho\right)^\circ$ are $N_Q$-invariant 
and so the induced action of $N_Q$ on $(Q,\mathsf D_Q)$ is canonical.

The second claim has been shown in Lemma \ref{intersection_has_constant_rank_cor}.
\qed\end{proof}

\begin{theorem}\label{Reduction_theorem}
Let $(M,\mathsf D)$ be a smooth Dirac manifold with a proper smooth Dirac
action of  a Lie group $G$ on it  such that
the intersection $\mathsf D\cap(\T\operp\V_G^\circ)$ is smooth.
Let $\mathsf D_Q$ be the induced Dirac structure 
on the connected component $Q$ of the isotropy type manifold $M_H$ and 
$q_Q:Q\to Q/N_Q$ the projection (a smooth surjective submersion).
The forward Dirac image $q_Q(\mathsf D_Q)$ {\rm (}as in Theorem 
\ref{conj-orbits-red}{\rm )}  is a Dirac structure
on $Q/N_Q$. If $\mathsf D$ is integrable, then $q_Q(\mathsf D_Q)$ is 
integrable.
\end{theorem}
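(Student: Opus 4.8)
The plan is to deduce this statement directly from the regular reduction result, Theorem \ref{conj-orbits-red}, applied to the action of $N_Q$ on the Dirac manifold $(Q,\mathsf D_Q)$. Concretely, I would view $Q/N_Q$ as the quotient of $Q$ by the free proper action of $N_Q/H$: since $Q$ is a connected component of $M_H$, every $q\in Q$ has $G$-isotropy exactly $H$, so the $N_Q$-isotropy is $H$ at every point and the induced $N_Q/H$-action is free. In particular all isotropy subgroups of this action are trivial, hence conjugate, which is exactly one of the hypotheses of Theorem \ref{conj-orbits-red}; the freeness and properness also account for the manifold structure on $Q/N_Q$ and the submersion property of $q_Q$ asserted in the statement.

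Next I would collect the remaining hypotheses from the preceding lemmas. Lemma \ref{thm2} already gives that the $N_Q$-action on $(Q,\mathsf D_Q)$ is proper and canonical, and that $\mathsf D_Q\cap\K_Q^\perp$ has constant rank on $Q$, where $\K_Q^\perp=TQ\operp\V_Q^\circ$ and $\V_Q$ is the vertical distribution of the $N_Q$-action. These are precisely the constant-rank and properness conditions required to invoke Theorem \ref{conj-orbits-red}, whose conclusion then exhibits the forward Dirac image $q_Q(\mathsf D_Q)$ as a genuine Dirac structure on $Q/N_Q$, given by the formula \eqref{red_dir_conj_iso} with $\pi$ replaced by $q_Q$.

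The one point requiring care, which I expect to be the main obstacle, is that Theorem \ref{conj-orbits-red} is stated for a \emph{connected} group, whereas $N_Q$ (and hence $N_Q/H$) may be disconnected, being only a union of connected components of $N(H)$. To handle this I would first apply Theorem \ref{conj-orbits-red} to the identity component $(N_Q/H)_0$, which is connected and acts freely and properly, producing a reduced Dirac structure on the intermediate quotient $Q/(N_Q/H)_0$; the residual discrete group $(N_Q/H)/(N_Q/H)_0$ then acts freely and properly on this quotient, with $Q/N_Q$ as its further quotient. Alternatively, and more cleanly, I would observe that the only role of connectedness in the proof of Theorem \ref{conj-orbits-red} is to ensure that the fibrewise prescription \eqref{red_dir_conj_iso} is independent of the chosen representative in a fibre, and that this independence here follows directly from the full $N_Q$-invariance of $\mathsf D_Q$ established in Lemma \ref{thm2}; the smoothness and the Lagrangian property of $q_Q(\mathsf D_Q)$ then rest solely on the constant-rank condition. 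Either route shows that the forward image is a well-defined Dirac structure on $Q/N_Q$.

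Finally, the integrability assertion follows by chaining two facts already available. If $\mathsf D$ is integrable, then $(Q,\mathsf D_Q)$ is integrable by Theorem \ref{thm1}, and the integrability clause of Theorem \ref{conj-orbits-red} then propagates integrability to the forward Dirac image $q_Q(\mathsf D_Q)$ on $Q/N_Q$.
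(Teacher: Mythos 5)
Your proposal is correct and follows essentially the same route as the paper: the paper's proof likewise consists of invoking Lemma \ref{thm2} to verify the hypotheses of regular reduction (the $N_Q$-action on $(Q,\mathsf D_Q)$ is proper and canonical with all isotropies equal to the fixed group $H$, and $\mathsf D_Q\cap\K_Q^\perp$ has constant rank on $Q$), then applying Theorem \ref{conj-orbits-red} to obtain the Dirac structure $q_Q(\mathsf D_Q)$ on $Q/N_Q$, with integrability obtained by chaining Theorem \ref{thm1} with the integrability clause of Theorem \ref{conj-orbits-red}. The one point where you go beyond the paper is the possible disconnectedness of $N_Q$ (Theorem \ref{conj-orbits-red} is stated for a connected group, and the paper applies it without comment); your repair is sound---either passing through the identity component and the residual free, properly discontinuous discrete action, or observing that the full $N_Q$-invariance of $\mathsf D_Q$ established in Lemma \ref{thm2} is exactly what the fibrewise prescription \eqref{red_dir_conj_iso} requires---and constitutes a worthwhile extra precision.
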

\begin{proof}
By Lemma \ref{thm2}, all the hypotheses for regular reduction are satisfied
for the smooth proper Dirac action of $N_Q$ on 
$(Q,\mathsf D_{Q})$ with fixed isotropies. Thus the quotient space
$Q/N_Q$ inherits a smooth Dirac structure defined by the
forward Dirac image $q_Q(\mathsf D_{Q}) $ of
$\mathsf D_Q$ on $Q/N_Q$ (see the paragraph about regular Dirac reduction at
the end of Subsection \ref{sec:Dirac_structures}). 
\qed\end{proof}

\subsection{Comparison with the Dirac strata of the reduced space 
$(\bar M,\bar\D)$}
We want to compare the Dirac manifolds $(Q/N_Q,q_Q(\mathsf D_Q))$
obtained above
 with the Dirac manifolds induced 
by the singular Dirac reduction method in \cite{JoRaSn11}.

We present a short review of the Dirac reduction methods in 
\cite{JoRaSn11}.
Let $(M,\mathsf{D})$ be a smooth Dirac manifold acted upon in a smooth proper
and Dirac
manner by a Lie group $G$ such that the intersection $\mathsf
D\cap(\T\operp\V_G^\circ)$ is spanned by its descending sections
(recall that $\T$  is defined by  \eqref{T}).
Let $\pi:M\to M/G$ be the projection.

Consider the subset $\D^G$ of $\Gamma(\mathsf{D})$ defined by
\[ \D^G:=\{(X,\alpha)\in\Gamma(\mathsf{D})\mid \alpha\in\Gamma(\V^\circ)^G\text{ and }
[X,\Gamma(\V)]\subseteq\Gamma(\V)\},\]
that is, the set of the descending sections of $\mathsf{D}$.

Each vector field $X$ satisfying
$[X,\Gamma(\V)]\subseteq\Gamma(\V)$ pushes-forward to a ``vector field'' $\bar X$
on $\bar M$. (Since we will not need these objects in the rest of the paper,
we  will not give more details about 
what we call the ``vector fields'' and ``one-forms''
on the stratified space $\bar M=M/G$ and refer to \cite{JoRaSn11} for more information.)
For each stratum ${\bar
  P}$ of $\bar M$, 
the restriction of $\bar X$ to points
of ${\bar P}$ is a vector field $X_{\bar P}$ on ${\bar P}$. 
On the other hand, if
$(X,\alpha)\in\D^G$, then we have $\alpha\in\Gamma(\V^\circ)^G$
 which pushes-forward to the one-form $\bar\alpha:=\pi_*\alpha$ such that,
for every $\bar Y\in \mx(\bar M)$ and every vector field  $Y\in\mx(M)$ satisfying
$Y\sim_\pi \bar Y$, we have 
\[\pi^*(\bar\alpha(\bar Y))=\alpha(Y).\]
Moreover, for each stratum ${\bar P}$ of $\bar M$, the restriction of $\bar\alpha$ to
points of ${\bar P}$ defines a one-form $\alpha_{\bar P}$ on ${\bar P}$. Let 
\[\bar\D=\{(\bar X,\bar\alpha)\mid (X,\alpha)\in\D^G\}\]
and for each stratum ${\bar P}$ of $\bar M$, set 
\[\D_{\bar P}=\{(X_{\bar P},\alpha_{\bar P})\mid (\bar X,\bar\alpha)\in\bar\D\}.\]
Define the smooth distribution $\mathsf{D}_{\bar P}$ on ${\bar P}$ by
\begin{equation}\label{def_D_barP}
\mathsf{D}_{\bar P}(\bar p)=\{(X_{\bar P}(\bar p),\alpha_{\bar P}(\bar p))\in T_{\bar p}{\bar P}\times
T^*_{\bar p}{\bar P}\mid (X_{\bar P},
\alpha_{\bar P})\in\D_{\bar P}\}
\end{equation}
for all $\bar p\in\bar P$.
Note that $\Gamma(\mathsf{D}_{\bar{P}}) = \mathcal{D}_{\bar{P}}$. We have the 
following theorem (see \cite{JoRaZa11} for the regular case).

\begin{theorem}\label{singred}
Let $(M,\mathsf{D})$ be a Dirac manifold with a proper Dirac action of a connected 
Lie group 
$G$ on it.
Let $\bar P$ be a stratum of the quotient space $\bar M$.
If $\mathsf{D}\cap(\T\operp\V_G^\circ)$ is spanned by its descending sections, then
$\mathsf{D}_{\bar P}$ defined in \eqref{def_D_barP} is a Dirac structure on
${\bar P}$.
If $(M,\mathsf D)$ is integrable, then $(\bar P,\mathsf D_{\bar P})$ is integrable.
\end{theorem}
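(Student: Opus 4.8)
The plan is to deduce Theorem \ref{singred} from Theorem \ref{Reduction_theorem} by identifying each stratum $\bar P$ of $\bar M$ with a quotient $Q/N_Q$ and then matching the two induced Dirac structures. First I would recall from Subsection \ref{subsection_orbits} that a stratum $\bar P$ is a connected component of $\pi(M_{(H)})=M_H/N(H)$, and that such a component is the image under $\pi$ of a connected component $Q$ of $M_H$. Two points $q,q'\in Q$ satisfy $\pi(q)=\pi(q')$ precisely when they lie in the same $N_Q$-orbit: any $g\in N(H)$ carrying a point of $Q$ into $Q$ maps the connected set $Q$ into $M_H$ so that $gQ\subseteq Q$, whence $g\in N_Q$. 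Thus $\pi\an{Q}$ factors through $q_Q$ and induces a smooth bijection $\phi:Q/N_Q\to\bar P$ with $\pi\circ\iota_Q=\iota_{\bar P}\circ\phi\circ q_Q$, where $\iota_{\bar P}:\bar P\hookrightarrow\bar M$ is the inclusion. Since $\pi\an{Q}$ is a surjective submersion onto $\bar P$, the map $\phi$ is a diffeomorphism.

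Next I would note that the hypothesis that $\mathsf D\cap(\T\operp\V_G^\circ)$ be spanned by its descending sections implies in particular that this intersection is smooth, so that Theorem \ref{Reduction_theorem} applies: $q_Q(\mathsf D_Q)$ is a Dirac structure on $Q/N_Q$, integrable whenever $\mathsf D$ is. It then suffices to prove the pointwise equality $\mathsf D_{\bar P}=\phi(q_Q(\mathsf D_Q))$ of the forward Dirac image; since $\phi$ is a diffeomorphism and forward Dirac images of Dirac structures under diffeomorphisms are again Dirac structures preserving integrability, the conclusion follows. To close the equality economically I would argue that both distributions have Lagrangian fibers of dimension $\dim\bar P$. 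For $\phi(q_Q(\mathsf D_Q))$ this is immediate from Theorem \ref{Reduction_theorem}. For $\mathsf D_{\bar P}$, isotropy follows from $\D^G\subseteq\Gamma(\mathsf D)$ together with the defining relation $\pi^*(\bar\alpha(\bar Y))=\alpha(Y)$, which transports the vanishing pairing on $M$ down to $\bar P$; fullness of dimension is exactly where the hypothesis ``spanned by descending sections'' is used, guaranteeing that \eqref{def_D_barP} fills out the whole Lagrangian fiber. Hence a single inclusion will suffice.

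The heart of the argument, and the main obstacle, is establishing that inclusion by unwinding both constructions on a descending section $(X,\alpha)\in\D^G$. Writing $X=X^G+X^\V$ with $X^G\in\mx(M)^G$ and $X^\V\in\Gamma(\V)$, the vertical part is annihilated by $T\pi$, so $\bar X=\pi_*X^G$; along $Q$ the $G$-invariant averaging of the proof of Theorem \ref{thm1} produces $\tilde X\in\mx(Q)$ with $\tilde X\sim_{\iota_Q}X^G\an{Q}$ and $(\tilde X,\iota_Q^*\alpha)\in\Gamma(\mathsf D_Q)$, where $\iota_Q^*\alpha$ is basic since $\alpha\in\Gamma(\V^\circ)^G$. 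Using the commutative relation $\pi\circ\iota_Q=\iota_{\bar P}\circ\phi\circ q_Q$ I would check that the restricted pair $(X_{\bar P},\alpha_{\bar P})$ is the image under $T\phi$ and $(T\phi)^*$ of the reduced pair obtained from $(\tilde X,\iota_Q^*\alpha)$, which by construction lies in $q_Q(\mathsf D_Q)$; this gives $\mathsf D_{\bar P}(\bar p)\subseteq\phi(q_Q(\mathsf D_Q))(\bar p)$. The delicate points are that the averaging does not destroy the descending property and that the basic one-form on $Q$ corresponds, via the isomorphisms $I_q$ of Lemma \ref{intersection_has_constant_rank_cor}, to a section of $\mathsf D\cap(\T_G\operp\V_G^\circ)$; both are already controlled by the machinery developed for $\mathsf D_Q$ and $\V_G^\circ$. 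Integrability finally transfers through $\phi$ because it is a Dirac diffeomorphism.
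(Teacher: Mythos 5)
Your strategy---identify $\bar P$ with $Q/N_Q$ via the diffeomorphism $\phi$ and deduce Theorem \ref{singred} from Theorem \ref{Reduction_theorem}---is an inversion of what the paper actually does: the paper never proves Theorem \ref{singred} (it quotes it from \cite{JoRaSn11}), and its final comparison theorem, whose proof your third paragraph essentially reproduces, establishes $\mathsf D_{\bar P}=\Phi\left(q_Q(\mathsf D_Q)\right)$ by \emph{using} Theorem \ref{singred}: there both fibers are already known to be Lagrangian of dimension $\dim\bar P$, so the single inclusion $\mathsf D_{\bar P}\subseteq\Phi\left(q_Q(\mathsf D_Q)\right)$ suffices. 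You cannot argue this way, because for you $\mathsf D_{\bar P}$ is not yet known to be Lagrangian, and this is exactly where your proposal has a genuine gap. The forward inclusion $\mathsf D_{\bar P}(\bar p)\subseteq\phi\left(q_Q(\mathsf D_Q)\right)(\bar p)$ only shows that $\mathsf D_{\bar P}(\bar p)$ is an isotropic subspace of a Lagrangian one, and a priori it could be a proper subspace. Your only words on the missing half are that the spanning hypothesis ``guarantees that \eqref{def_D_barP} fills out the whole Lagrangian fiber''; no argument is given, and this is not a routine fact---it is essentially the content of the theorem. Note also that the ``delicate points'' you list (averaging, the maps $I_q$) all belong to the forward inclusion, where the hypothesis on $\mathsf D\cap(\T\operp\V_G^\circ)$ enters only through Theorem \ref{Reduction_theorem}; so as written, the place where the spanning hypothesis is supposed to do its decisive work contains no proof at all. (Your first two paragraphs are fine; the identification of $\bar P$ with $Q/N_Q$ via the factorization of $\pi\an{Q}$ through $q_Q$ is correct, and slicker than the paper's function-by-function argument.)

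The way to close the gap is to prove the \emph{reverse} inclusion $\phi\left(q_Q(\mathsf D_Q)\right)(\bar p)\subseteq\mathsf D_{\bar P}(\bar p)$, which is where the spanning hypothesis genuinely acts: by \eqref{red_dir_conj_iso}, every element of $q_Q(\mathsf D_Q)(\bar q)$ is the value at $\bar q$ of the reduction of some $(X_Q,q_Q^*\bar\beta)\in\Gamma(\mathsf D_Q)$ whose value at $q$ lies in $\mathsf D_Q(q)\cap\K_Q^\perp(q)$; applying $I_q$ (Lemma \ref{intersection_has_constant_rank_cor}) gives an element of $\mathsf D(q)\cap\left(T_qQ\times\V_G^\circ(q)\right)\subseteq\left(\mathsf D\cap(\T\operp\V_G^\circ)\right)(q)$, which by hypothesis is the value of a descending section $(Y,\gamma)\in\D^G$; running your diagram chase backwards shows that $(Y_{\bar P},\gamma_{\bar P})(\bar p)$ is precisely the $\phi$-image of the chosen element, so that image lies in $\mathsf D_{\bar P}(\bar p)$. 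Combined with the isotropy of $\mathsf D_{\bar P}$ (which you correctly extract from $\pi^*(\bar\alpha(\bar Y))=\alpha(Y)$ and the Lagrangian property of $\mathsf D$), this gives
\begin{equation*}
\mathsf D_{\bar P}(\bar p)\subseteq\mathsf D_{\bar P}(\bar p)^\perp
\subseteq\left(\phi\left(q_Q(\mathsf D_Q)\right)(\bar p)\right)^\perp
=\phi\left(q_Q(\mathsf D_Q)\right)(\bar p)\subseteq\mathsf D_{\bar P}(\bar p),
\end{equation*}
hence equality everywhere, and Theorem \ref{Reduction_theorem} then delivers both the Dirac property and integrability through the diffeomorphism $\phi$. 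Separately, there is a smaller but real error in your third paragraph: the pair $(X^G,\alpha)$ obtained from the decomposition $X=X^G+X^\V$ need not be a section of $\mathsf D$, so the proof of Theorem \ref{thm1} cannot be applied to it. You must average the whole pair, obtaining $(X_G,\alpha)\in\Gamma(\mathsf D)$ with $X_G=X^G+V_G$ and $V_G\in\Gamma(\V)$, and take $\tilde X\sim_{\iota_Q}X_G\an{Q}$ rather than $X^G\an{Q}$; since $V_G$ is vertical this changes nothing after projection to $\bar P$, but as written your assertion $(\tilde X,\iota_Q^*\alpha)\in\Gamma(\mathsf D_Q)$ is unjustified.
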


Note that if $\mathsf D\cap(\T\operp\V_G^\circ)$ is spanned by its descending sections, it is smooth
and all the hypotheses of Theorem \ref{Reduction_theorem} are satisfied.
We can thus compare the reduction methods in 
Theorems \ref{singred} and  \ref{Reduction_theorem}.

We have the following theorem, comparing the
Dirac manifolds $(Q/N_Q,q_Q(\mathsf D_Q))$ and $(\bar P,\mathsf D_{\bar
  P})$ if $Q$ is a connected component of $M_H$, $H$ a compact subgroup of
$G$, and $P$ is the connected component of $M_{(H)}$ containing $Q$. 
\begin{theorem}
Let $(M,\mathsf D)$ be a smooth Dirac manifold
with a proper smooth Dirac action of a Lie group $G$ on it,
such that the intersection $\mathsf D\cap(\T\operp\V_G^\circ)$ is spanned by 
its descending sections.

Let $Q$ be  a connected component of $M_H$, $H$ a compact subgroup of
$G$, and $P$ the connected component of $M_{(H)}$ containing $Q$.
Consider the Dirac manifolds 
 $(Q/N_Q,q_Q(\mathsf D_Q))$ as in Theorem \ref{Reduction_theorem} and 
$(\bar P,\mathsf D_{\bar
  P})$ as in Theorem \ref{singred}, and define the map
$\Phi:Q/N_Q\to \bar P$ by $(\Phi\circ q_Q)(q)=\pi(q)$ for all $q\in
Q\subseteq P$.
Then the map $\Phi$ is a diffeomorphism preserving the Dirac structure,
i.e, it is a forward and, equivalently, a backward Dirac map. 
\end{theorem}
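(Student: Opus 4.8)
The plan is to prove the statement in two stages: first that $\Phi$ is a well-defined diffeomorphism, then that it intertwines the two reduced Dirac structures. For the diffeomorphism, I would use that $G$ is connected and the action proper, so that the connected component $P$ of $M_{(H)}$ containing $Q$ equals the $G$-saturation $G\cdot Q$, and that for $q,q'\in Q$ one has $\pi(q)=\pi(q')$ if and only if $q'=n\cdot q$ for some $n\in N_Q$. Indeed, $q,q'\in M_H$ forces the connecting element $g$ into $N(H)$, and since $\Phi_g$ permutes the connected components of $M_H$, the requirement $g\cdot q\in Q$ forces $g\cdot Q=Q$, i.e.\ $g\in N_Q$. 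Hence $\Phi$ is a well-defined bijection, and since $\Phi\circ q_Q=\pi\circ\iota_Q$ with $q_Q$ a surjective submersion, $\Phi$ is smooth; a dimension count via the slice theorem shows that $\pi|_Q:Q\to\bar P$ is a surjective submersion, so that $\Phi$ is a bijective submersion between manifolds of equal dimension, hence a diffeomorphism.

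For the Dirac part, since $\Phi$ is a diffeomorphism it suffices, by the remark following the definition of Dirac maps, to prove that $\Phi$ is a \emph{forward} Dirac map; the backward property then follows automatically. The strategy is to exhibit a single family of sections of $\mathsf D$ that simultaneously generates $q_Q(\mathsf D_Q)$ and $\mathsf D_{\bar P}$ in a $\Phi$-related fashion. Concretely, I would start from a descending section $(X,\alpha)\in\D^G$, which generates a section $(X_{\bar P},\alpha_{\bar P})$ of $\mathsf D_{\bar P}$ as in Theorem \ref{singred}, and replace $X$ by its $G$-invariant average $X_G$. The key auxiliary computation is that $X_G\sim_\pi\bar X$: using $\pi\circ\Phi_{h^{-1}}=\pi$ and $\int_H d\lambda^H=1$ one gets $T\pi(X_G(m))=\int_H\bar X(\pi(m))\,d\lambda^H=\bar X(\pi(m))$, so $(X_G,\alpha)\in\D^G$ is still a $G$-invariant descending section inducing the \emph{same} pair $(X_{\bar P},\alpha_{\bar P})$. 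Being $G$-invariant, $X_G$ is tangent to $Q$ and yields $\tilde X\in\mx(Q)$ with $\tilde X\sim_{\iota_Q}X_G$, while $\alpha\in\Gamma(\V^\circ)^G$ gives $\iota_Q^*\alpha\in\Gamma((\V_Q)^\circ)$; by the construction in the proof of Theorem \ref{thm1} together with Lemma \ref{intersection_has_constant_rank_cor}, the pair $(\tilde X,\iota_Q^*\alpha)$ is a section of $\mathsf D_Q\cap\K_Q^\perp$, hence pushes forward to a section of $q_Q(\mathsf D_Q)$ as in Theorem \ref{conj-orbits-red}.

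The heart is then the two compatibility identities at such a $q\in Q$, writing $\bar q=q_Q(q)$ and $\bar p=\pi(q)=\Phi(\bar q)$. Differentiating $\Phi\circ q_Q=\pi\circ\iota_Q$ and using $\tilde X\sim_{\iota_Q}X_G\sim_\pi\bar X$ gives $\overline{\tilde X}\sim_\Phi X_{\bar P}$, where $\overline{\tilde X}$ is the $q_Q$-pushforward of $\tilde X$. Pulling back by the surjective submersion $q_Q$ gives $q_Q^*\Phi^*\alpha_{\bar P}=(\pi\circ\iota_Q)^*\alpha_{\bar P}=\iota_Q^*\alpha=q_Q^*\overline{\iota_Q^*\alpha}$, so injectivity of $q_Q^*$ on one-forms yields $\Phi^*\alpha_{\bar P}=\overline{\iota_Q^*\alpha}$. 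Thus for the generating section $(X_{\bar P},\alpha_{\bar P})$ of $\mathsf D_{\bar P}$ we have produced $W=\overline{\tilde X}$ with $W\sim_\Phi X_{\bar P}$ and $(W,\Phi^*\alpha_{\bar P})\in\Gamma(q_Q(\mathsf D_Q))$, which is exactly the forward Dirac map condition. Since $\Gamma(\mathsf D_{\bar P})=\D_{\bar P}$ consists precisely of such generating sections, this verifies the condition for all target sections, completing the proof.

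I expect the main obstacle to be the averaging step, namely proving that replacing a descending section by its $G$-invariant average does not change the induced section on $\bar P$ while simultaneously making it restrictable to $Q$; this is precisely what glues the two a priori unrelated reduction constructions together. A secondary technical point is to confirm that the $G$-invariant descending sections genuinely span $\mathsf D_Q\cap\K_Q^\perp$ fiberwise over $Q$, so that $q_Q(\mathsf D_Q)$ is generated by the pushforwards above; this follows from the isomorphism $I_q$ of Lemma \ref{intersection_has_constant_rank_cor} together with the fact, from Lemma \ref{average_is_projection}, that averaging fixes the values of such sections along $Q$.
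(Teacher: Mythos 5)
Your proposal is correct and takes essentially the same approach as the paper: the same bijectivity argument, and the same core Dirac argument (average a descending section generating $(X_{\bar P},\alpha_{\bar P})$, restrict it to $Q$, descend to $Q/N_Q$, and verify $\Phi$-relatedness through the commutative diagram, using maximality/Lagrangian-ness to get the reverse inclusion for free). The only minor divergences are that you prove smoothness of $\Phi^{-1}$ by a bijective-submersion and dimension argument where the paper pulls back smooth functions and re-extends them by averaging, and you justify $X_G\sim_\pi\bar X$ by a direct computation using $\pi\circ\Phi_h=\pi$ where the paper invokes the decomposition $X=X^G+V$ of descending vector fields; both variants are sound.
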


\begin{proof}
We have the following commutative diagram:
\begin{displaymath}
\begin{xy}
\xymatrix{
Q\ar[d]_{q_Q}\ar[r]^{\iota_{Q,P}}\ar@/^0.6cm/[rr]^{\iota_Q}
&P\ar[d]_{\pi\an{P}}\ar[r]^{\iota_P}&M\ar[d]_{\pi}\\
Q/N_Q\ar[r]_\Phi&\bar P\ar[r]_{\iota_{\bp}}&\bar M
}
\end{xy}
\end{displaymath}
where $\iota_{Q, P}, \iota_P, \iota_{\bar{P}}$ are inclusions.
We show that $\Phi$ is bijective. Let $q_Q(m_1)$, $q_Q(m_2)\in Q/N_Q$ 
be such that $\pi(m_1)=\Phi(q_Q(m_1))=\Phi(q_Q(m_2))=\pi(m_2)$. Then there exists
$g\in G$ such that $gm_1=m_2$. Since $m_1,m_2\in Q$, we have 
$gHg\inv=H$ and thus $g\in N(H)$. However, $g $ maps $Q $ 
onto a connected component of $M_H$ containing $m_2 \in Q$, so it follows that $g \in N_Q $. We have then 
$q_Q(m_1)=q_Q(m_2)$ thereby showing that $\Phi$ is injective. Choose
 now $\pi(m) \in \bar P$ with $m\in P$. Since $P\subseteq G\cdot  Q$, there exists
$g\in G$ and $m'\in Q\subseteq P$ such that
$gm'=m$. We get
$(\Phi\circ q_Q)(m')=\pi(m')=\pi(m)$ and hence $\Phi$ is surjective.

\medskip

To show that $\Phi $ is a diffeomorphism, choose a smooth
function $\bar f\in C^\infty(\bar P)$. Then the $G_P$-invariant function
$(\pi\an P)^*\bar f=f$ satisfies
 $f\in C^\infty(P)$, where $G_P$ is the Lie subgroup
of $G$ that leaves $P$ invariant. Since $Q$ is a smooth submanifold 
of $P$, the function $\iota_{Q,P}^*f$ is an element 
of $C^\infty(Q)$. But since $f\in C^\infty(P)^{G_P}$, we have
$\iota_{Q,P}^*f\in C^\infty(Q)^{N_Q}$ and there
exists $\tilde f\in C^\infty(Q/N_Q)$ such that
$q_Q^*\tilde f=\iota_{Q,P}^*f$. We have then 
$q_Q^*(\Phi^*\bar f)=
(\Phi\circ q_Q)^*\bar f=(\pi\an{P}\circ\iota_{Q,P})^*\bar f
=\iota_{Q,P}^*f=q_Q^*\tilde f$ 
and thus
$\Phi^*\bar f=\tilde f\in C^\infty(Q/N_Q)$ which shows that $\Phi$ is smooth.

Choose now a smooth function
$\tilde f\in C^\infty(Q/N_Q)$. Then the pull back $f_Q=q_Q^*\tilde f$
is a $N_Q$-invariant element of $C^\infty(Q)$ and we find, 
by the same method as 
in the proof of Lemma \ref{intersection_has_constant_rank_cor},
a smooth $G$-invariant function $f\in C^\infty(M)^G$ such that
$\iota_Q^*f=f_Q$. We have then 
$f_P:=\iota_P^*f\in C^\infty(P)^{G_P}$ and $\iota_{Q,P}^*f_P=\iota_{Q,P}^*(\iota_P^*f)
=\iota_Q^*f=f_Q$. There exists
$\bar f\in C^\infty(\bar P)$ such that
$(\pi\an{P})^*\bar f=f_P$ and we get
$q_Q^*(\Phi^*\bar f)=  
(\Phi\circ q_Q)^*\bar f=(\pi\an{P}\circ\iota_{Q,P})^*\bar f
=\iota_{Q,P}^*f_P=f_Q=q_Q^*\tilde f$, that
is, $\Phi^*\bar f=\tilde f$ and thus
$(\Phi\inv)^*\tilde f=\bar f\in C^\infty(\bar P)$. Therefore $\Phi^{-1}$ is smooth.

\medskip

We show now that $\mathsf D_{\bar P}$ is the $\Phi$-forward Dirac image of $q_Q(\mathsf D_Q)$.
The fact that $q_Q(\mathsf D_Q)$ is the $\Phi$-backward 
Dirac image of $\mathsf D_{\bar P}$ follows because $\Phi$ 
is a diffeomorphism.

Consider the forward Dirac image  $\Phi(q_Q(\mathsf D_Q))$
of $q_Q(\mathsf D_Q)$ under $\Phi$, defined on $\bar P$ by
$$\Phi(q_Q(\mathsf D_Q))(\bar p)=\left\{ 
(v_{\bar p}, \alpha_{\bar p})\in\mathsf P_{\bar P}(\bar p)\left|\begin{array}{c}
\exists (v_{\bar q}, \alpha_{\bar q})\in q(\mathsf D_Q)(\bar q)
\text{ such that } \\
T_{\bar q}\Phi v_{\bar q}=v_{\bar p} \text{ and } \alpha_{\bar q}=(T_{\bar q}\Phi)^*\alpha_{\bar p}
\end{array}
\right.\right\}
$$
for all $\bar p\in \bar P$ and $\bar q=\Phi\inv(\bar p)$. Since 
$\Phi$ is a diffeomorphism and $q_Q(\mathsf D_Q)$ is a Dirac structure on $Q/N_Q$, 
the forward Dirac image $\Phi(q_Q(\mathsf D_Q))$ is a Dirac structure on $\bar P$.
 Hence, it is sufficient to show 
the inclusion $\mathsf D_{\bar P}\subseteq \Phi(q_Q(\mathsf D_Q))$.

Choose
$(\bar  X,\bar \alpha)\in\Gamma(\mathsf D_{\bar P})$. Then there exists
a pair $(X_{\bar M},\alpha_{\bar M})\in\bar\D$
such that $X_{\bar M}\an{\bar P}=\bar X$ and 
$\alpha_{\bar M}\an{\bar P}=\bar{\alpha}$.
Thus we find $(X,\alpha)\in\D^G$ satisfying $X\sim_\pi X_{\bar M}$ and
$\pi^*\alpha_{\bar M}=\alpha$. The one-form $\alpha$ is a $G$-invariant
section of $\V^\circ$ and since $[X, \Gamma(\mathcal{V})] \subseteq 
\Gamma( \mathcal{V}) $, the vector field $X$ can be written as a sum 
$X=X^G+V$ with $X^G\in\mx(M)^G$ and $V\in\Gamma(\V)$ (see 
\cite{JoRaSn11}). 
Let $(X_G,\alpha_G)$ be the $G$-invariant average of $(X,\alpha)$ in a 
tube $U$ centered at a point $m\in P$. We have then $\alpha_G=\alpha$ 
and $X_G=X^G+V_G$, that is, we still have $X_G\sim_\pi X_{\bar M}$. 
Since $(X_G,\alpha)$ is a $G$-invariant
section of $\mathsf D\cap(\T_G\operp\V_G^\circ)$, we have
$(X_G,\alpha)\an{Q}\in\Gamma\left(\mathsf
D\cap\left(TQ\operp\left(TQ^\varrho\right)^\circ\right)\right)$
and, by definition of $\mathsf D_Q$, we find $(X_Q,\alpha_Q)\in
\Gamma(\mathsf D_Q)$ such that
$X_Q\sim_{\iota_Q}X_G$ and $\alpha_Q=\iota_Q^*\alpha$. The pair 
$(X_Q,\alpha_Q)$ is then automatically a $N_Q$-invariant section 
of $\mathsf{D}_Q\cap\K_Q^\perp$ (by the proof of Lemma 
\ref{intersection_has_constant_rank_cor})
and descends thus to a section $(\widetilde X,
\widetilde \alpha)$ of $ q_Q(\mathsf D_{Q})$, that is,
we have  $X_Q\sim_{q_Q}\widetilde X$ and 
$q_Q^*\widetilde\alpha=\alpha_Q$.
Therefore, for all $m'\in Q$ we have
\begin{align*}
T_{q_Q(m')}\Phi\left(\widetilde X(q_Q(m'))\right)
&=T_{q_Q(m')}\Phi \left(T_{m'}q_Q\left(X_Q(m')\right) \right)\\
&=T_{m'}(\pi\an{P}\circ\iota_{Q,P})\left(X_Q(m')\right)\\
&=T_{m'}(\pi\circ\iota_{Q})\left(X_Q(m')\right)
=T_{m'}\pi \left(X_G(m')\right)\\
&=X_{\bar M}(\pi(m'))=\bar X(\pi(m')),
\end{align*}
that is, $\widetilde X\sim_{\Phi}\bar X$. In the same manner, we show the equality
$q_Q^*\widetilde \alpha=q_Q^*(\Phi^*\bar\alpha)$
which implies that $\widetilde \alpha 
= \Phi^\ast\bar{\alpha}$ since $q_Q:Q \rightarrow Q/N_Q$ is a surjective submersion. Thus, 
$(\bar  X,\bar \alpha)$ is a section of 
$\Phi(q_Q(\mathsf D_Q))$.
\qed\end{proof}

\def\cprime{$'$} \def\polhk#1{\setbox0=\hbox{#1}{\ooalign{\hidewidth
  \lower1.5ex\hbox{`}\hidewidth\crcr\unhbox0}}}
\providecommand{\bysame}{\leavevmode\hbox to3em{\hrulefill}\thinspace}
\providecommand{\MR}{\relax\ifhmode\unskip\space\fi MR }
\providecommand{\MRhref}[2]{%
  \href{http://www.ams.org/mathscinet-getitem?mr=#1}{#2}
}
\providecommand{\href}[2]{#2}

\end{document}